\theoremstyle{plain}
\newtheorem{thm}{Theorem}[section]
\newtheorem{prop}[thm]{Proposition}
\newtheorem{lem}[thm]{Lemma}
\newtheorem{cor}[thm]{Corollary}
\theoremstyle{remark}
\newtheorem{rmk}[thm]{Remark}
\theoremstyle{definition}
\newtheorem{definition}[thm]{Definition}
\newtheorem{example}[thm]{Example}
\newtheorem{notation}[thm]{Notation}
\newcommand{\bC}{\mathbb{C}}
\newcommand{\bA}{\mathbb{A}}
\newcommand{\asc}{\mathrm{asc}}
\newcommand{\la}{\lambda}
\newcommand{\Exp}{\mathrm{Exp}}
\newcommand{\Ht}{\widetilde{\mathrm{H}}}
\newcommand{\Add}{{\mathrm{Add}}}
\newcommand{\SYT}{{\mathrm{SYT}}}
\newcommand{\shape}{{\mathrm{sh}}}
\newcommand{\weight}{\mathrm{weight}}
\newcommand{\e}{\mathbf{e}}
\newcommand{\low}{\mathrm{m}}
\newcommand{\rowdiff}{\mathrm{d}}
\renewcommand{\L}{\mathrm{L}}
\newcommand{\crap}{\heartsuit}
\newcommand\partitionfr[1]{
	\coordinate (prev) at (0,0);
	\foreach \dir in {#1}{
		\draw[help lines, line width = .25mm] (prev) -- +(0,1) coordinate (prev);
		\draw[help lines, line width = .25mm] (prev)+(0,-1) grid +(\dir,0);
	};
}
\newcommand\dyckpathshade[3]{
	start point, size, Dyck word (size x 2 booleans)
	\fill[blue]  (#1) rectangle +(#2,#2);
	\fill[white]
	(#1)
	\foreach \dir in {#3}{
		\ifnum\dir=0
		-- ++(1,0)
		\else
		-- ++(0,1)
		\fi
	} |- (#1);
}
\newcommand\dyckpath[3]{

	\draw[help lines, gray!60, thin] (#1) grid +(#2,#2);
	\draw[dashed] (#1) -- +(#2,#2);
	\coordinate (prev) at (#1);
	\foreach \dir in {#3}{
		\ifnum\dir=0
		\coordinate (dep) at (1,0);
		\else
		\coordinate (dep) at (0,1);
		\fi
		\draw[line width=3pt,] (prev) -- ++(dep) coordinate (prev);
	};
}
\newcommand\dyckpathmn[4]{
	\draw[gray!60, thin] (#1) grid +(#2,#3);
	\coordinate (prev) at (#1);
	\foreach \dir in {#4}{
		\ifnum\dir=0
		\coordinate (dep) at (1,0);
		\else
		\coordinate (dep) at (0,1);
		\fi
		\draw[line width=3pt,black] (prev) -- ++(dep) coordinate (prev);
	};
}
\newcommand\dyckpathmnnogrid[4]{
	\coordinate (prev) at (#1);
	\foreach \dir in {#4}{
		\ifnum\dir=0
		\coordinate (dep) at (1,0);
		\else
		\coordinate (dep) at (0,1);
		\fi
		\draw[line width=3pt,black] (prev) -- ++(dep) coordinate (prev);
	};
}
\renewcommand*{\eqref}[1]{%
  \hyperref[{#1}]{\textup{\tagform@{\ref*{#1}}}}%
}
\newdimen\qrr@tikz@sharp@z@
	\edef\pgf@marshal{\noexpand\pgfutil@in@{and}{\pgfgetarrowoptions{sharp >}}}%
	\edef\pgf@tempa{\pgfgetarrowoptions{sharp >}}
	\qrr@tikz@sharp@parse\pgfgetarrowoptions{sharp >}and-\pgfgetarrowoptions{sharp >}\@qrr@tikz@sharp@parse
	\let\qrr@tikz@sharp@max\pgfmathresult
	\pgfmathsetlength\pgf@xa{.5*\pgflinewidth * tan(\qrr@tikz@sharp@max)}%
	\edef\pgf@marshal{\noexpand\pgfutil@in@{and}{\pgfgetarrowoptions{sharp >}}}%
	\edef\pgf@tempa{\pgfgetarrowoptions{sharp >}}
	\qrr@tikz@sharp@parse\pgfgetarrowoptions{sharp >}and-\pgfgetarrowoptions{sharp >}\@qrr@tikz@sharp@parse
	\pgfmathsetlength\pgf@ya{.5*\pgflinewidth * tan(max(\pgf@tempa,\pgf@tempb,0))}%
	\pgfmathsetlength\pgf@xa{-.5*\pgflinewidth * tan(\pgf@tempa)}%
	\pgfmathsetlength\pgf@xb{-.5*\pgflinewidth * tan(\pgf@tempb)}%
	\edef\pgf@marshal{\noexpand\pgfutil@in@{and}{\pgfgetarrowoptions{sharp <}}}%
	\edef\pgf@tempa{\pgfgetarrowoptions{sharp <}}
\qrr@tikz@sharp@parse\pgfgetarrowoptions{sharp <}and-\pgfgetarrowoptions{sharp <}\@qrr@tikz@sharp@parse
	\let\qrr@tikz@sharp@max\pgfmathresult
	\pgfmathsetlength\pgf@xa{.5*\pgflinewidth * tan(\qrr@tikz@sharp@max)}%
	\edef\pgf@marshal{\noexpand\pgfutil@in@{and}{\pgfgetarrowoptions{sharp <}}}%
	\edef\pgf@tempa{\pgfgetarrowoptions{sharp <}}
\qrr@tikz@sharp@parse\pgfgetarrowoptions{sharp <}and-\pgfgetarrowoptions{sharp <}\@qrr@tikz@sharp@parse
	\pgfmathsetlength\pgf@ya{.5*\pgflinewidth * tan(max(\pgf@tempa,\pgf@tempb,0))}%
	\pgfmathsetlength\pgf@xa{-.5*\pgflinewidth * tan(\pgf@tempa)}%
	\pgfmathsetlength\pgf@xb{-.5*\pgflinewidth * tan(\pgf@tempb)}%
\def\qrr@tikz@sharp@parse#1and#2\@qrr@tikz@sharp@parse{\def\pgf@tempa{#1}\def\pgf@tempb{#2}}
\title[On Macdonald expansions of $q$-chromatic symmetric functions]{On Macdonald expansions of $q$-chromatic symmetric functions and the Stanley--Stembridge Conjecture}
\author{Sean T.~Griffin}
\address{Fakultät für Mathematik, Universität Wien, Austria}
\email{sean.griffin@univie.ac.at}
\author{Anton Mellit}
\address{Fakultät für Mathematik, Universität Wien, Austria}
\email{anton.mellit@univie.ac.at}
\author{Marino Romero}
\address{Fakultät für Mathematik, Universität Wien, Austria}
\email{marino.romero@univie.ac.at}
\author{Kevin Weigl}
\address{Fakultät für Mathematik, Universität Wien, Austria}
\email{kevin.weigl@univie.ac.at}
\author{Joshua Jeishing Wen}
\address{Fakultät für Mathematik, Universität Wien, Austria}
\email{joshua.jeishing.wen@univie.ac.at}
\date{\today}
\begin{document}

\begin{abstract}
    The Stanley--Stembridge conjecture asserts that the chromatic symmetric function of a $(3+1)$-free graph is $e$-positive. Recently, Hikita proved this conjecture by giving an explicit $e$-expansion of the Shareshian--Wachs $q$-chromatic refinement for unit interval graphs. Using the $\mathbb{A}_{q,t}$ algebra, we give an expansion of these $q$-chromatic symmetric functions into Macdonald polynomials. Upon setting $t=1$, we obtain another proof of the Stanley--Stembridge conjecture and rederive Hikita's formula. Upon setting $t=0$, we obtain an expansion into Hall--Littlewood symmetric functions.
\end{abstract}

\maketitle

\section{Introduction}

In 1995, Stanley~\cite{Stanley-Chromatic} introduced a family of symmetric functions, called the \emph{chromatic symmetric functions}, attached to each finite simple graph which generalize the chromatic polynomial of the graph. The Stanley--Stembridge conjecture~\cite{Stanley-Stembridge}, a long-standing open problem in Algebraic Combinatorics, states that the chromatic symmetric function of a $(3+1)$-free graph is \emph{$e$-positive}, meaning that it expands positively in the elementary symmetric function basis. By \cite{Guay-Paquet}, the conjecture can be reduced to proving the $e$-positivity for unit interval graphs. Shareshian and Wachs~\cite{Shareshian-Wachs} subsequently generalized the conjecture by introducing a certain refinement, the \emph{chromatic quasisymmetric functions} (or \emph{$q$-chromatic symmetric functions}) $\chi_\e[X;q]$, which they conjecture are also $e$-positive.

Recently, Hikita~\cite{Hikita} proved the Stanley--Stembridge conjecture by giving a formula for the coefficients $c_{\e,\lambda}(q)$ in the $e$ expansion of $\chi_\e[X;q]$ as a sum of rational functions in $q$, which then specialize to nonnegative numbers upon taking $q=1$.

On the other hand, Carlsson and Mellit~\cite{Carlsson-Mellit}, in their proof of the long-standing Shuffle Conjecture, proved a plethystic formula relating $\chi_\e[X;q]$ to LLT polynomials, and they related the corresponding LLT polynomials to the action of a certain algebra, denoted by $\bA_{q,t}$, on symmetric functions. In \cite{Carlsson-Gorsky-Mellit} the action of $\bA_{q,t}$ was explicitly computed in a certain basis, generalizing the Macdonald basis.

In this article, we show that combining these results produces an explicit formula for the expansion of the LLT polynomials in the basis of modified Macdonald polynomials $\Ht_\lambda[X;q,t]$. Applying the plethysm leads to an explicit expansion of $\chi_\e(q)$ in the basis of integral form Macdonald polynomials. Setting $t=1$ turns the Macdonald polynomials into products of elementary symmetric functions, so we obtain an $e$-expansion of $\chi_\e[X;q]$. Thus we reprove Hikita's formula.
\\

For the reader's convenience, we mention the following conversion formulas between the modified Macdonald polynomials from \cite{GHT} and the integral form Macdonald polynomials from \cite{Macdonald-book}:
\begin{equation} \label{eq:HtoJ}
\Ht_\mu[(q-1) X; q, t] = q^{n(\mu')+|\mu|} J_{\mu'}[X;t,q^{-1}] = (-1)^{|\mu|}t^{n(\mu)} J_{\mu'}[X;t^{-1},q]
\end{equation}
where $n(\mu) = \sum_i (i-1)\mu_i$ and $|\mu| = \sum_i\mu_i$.

The main result can be stated as follows:
\begin{thm}\label{thm:IntroMainThm}
We have explicit tableau formulas for the coefficients $C_{\e,\mu}(q,t)$ appearing in the expansion
\begin{equation}
\chi_\e[X;q] = \sum_{\mu} C_{\e,\mu}(q,t) (q-1)^{-n} \Ht_\mu[(q-1)X; q, t],
\label{eq:MainThm}
\end{equation}
expanding $\chi_\e(q)$ in terms of plethystically evaluated modified Macdonald polynomials. 
In particular, when $t=1$, we get
\[
\chi_\e[X;q] = \sum_{\mu} C_{\e,\mu}(q,1) \left(\prod_{i}[\mu_i]_q!\right) e_\mu[X].
\]
Upon setting $q=1$, we find that the coefficient of $e_\mu$ is nonnegative, so the Stanley--Stembridge conjecture holds. Furthermore, the coefficient of $e_\mu$ is the same as the one found in \cite{Hikita}. 
\end{thm}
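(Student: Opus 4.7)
The plan is to combine two results from the literature. First, Carlsson--Mellit \cite{Carlsson-Mellit} gave a plethystic identity expressing $\chi_\e[X;q]$ as $(q-1)^{-n}$ times an LLT polynomial evaluated at $(q-1)X$, and realized this LLT polynomial as a specific word $\omega_\e$ in the generators of the $\bA_{q,t}$ algebra applied to the constant $1$. Second, Carlsson--Gorsky--Mellit \cite{Carlsson-Gorsky-Mellit} computed the action of each $\bA_{q,t}$ generator explicitly in a basis refining $\{\Ht_\mu[X;q,t]\}$. The coefficient $C_{\e,\mu}(q,t)$ in \eqref{eq:MainThm} should then be read off as a sum, over all ways of iterating the generators of $\omega_\e$ through the refined basis, of the product of local $q,t$-weights supplied by \cite{Carlsson-Gorsky-Mellit}. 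Reorganizing this sum by the cells of $\mu$ produces the tableau formula.

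Concretely, I would first record, from \cite{Carlsson-Mellit}, the word $\omega_\e$ attached to the unit interval graph data $\e$. I would then apply the Carlsson--Gorsky--Mellit action formulas generator by generator, at each step tracking which refined basis vector we are in and which $q,t$-monomial is picked up. Summing over all admissible histories and regrouping by the terminal Macdonald shape $\mu$ yields a combinatorial formula for $C_{\e,\mu}(q,t)$ indexed by tableau-like objects of shape $\mu$, establishing \eqref{eq:MainThm}.

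For the $t=1$ specialization, I would invoke the standard identity
\[
(q-1)^{-|\mu|}\,\Ht_\mu[(q-1)X;q,1] = \Bigl(\prod_i[\mu_i]_q!\Bigr)\,e_\mu[X],
\]
which follows from \eqref{eq:HtoJ} together with the known specialization of integral form Macdonald polynomials when one parameter is set to $1$. Substituting into \eqref{eq:MainThm} produces the stated $e$-expansion with coefficient $C_{\e,\mu}(q,1)\prod_i[\mu_i]_q!$. Setting further $q=1$, one inspects the tableau formula for $C_{\e,\mu}(q,1)$: each local $q$-weight degenerates to a nonnegative integer, so every $e_\mu$ coefficient is nonnegative, which is exactly the Stanley--Stembridge conjecture.

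To identify the $q=1$ coefficient with the one in \cite{Hikita}, I would produce a weight-preserving bijection between our indexing tableaux and Hikita's combinatorial objects. I expect this last identification to be the principal obstacle: the algebraic content of the proof is already contained in \cite{Carlsson-Mellit, Carlsson-Gorsky-Mellit}, whereas matching two a priori distinct but manifestly nonnegative formulas for the same coefficient typically requires a nontrivial combinatorial argument, possibly involving a careful statistic-preserving bijection or a term-by-term manipulation that untangles the differing conventions.
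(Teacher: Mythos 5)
Your overall route is the same as the paper's: realize $F_\e = d_-^{b_\ell}d_+^{a_\ell}\cdots d_-^{b_1}d_+^{a_1}(1)$ via Carlsson--Mellit, push this through the Carlsson--Gorsky--Mellit action on the fixed-point basis $I_{\lambda,w}$, and regroup the admissible histories as standard tableaux with multiplicative weights (this is exactly Proposition~\ref{prop:FDExpansion}, with indexing set $\SYT^\e_\lambda$ and weights $A_T^{\e}(q,t)$); the $t=1$ Macdonald specialization you invoke is also the one used. So there is no divergence of strategy.

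There is, however, one concrete gap in your handling of the specializations. The individual weights $A_T^{\e}(q,t)$ are \emph{not} regular at $t=1$: each factor $\frac{x-tw_i}{x-qtw_i}$ acquires a pole at $t=1$ whenever some $w_i$ sits in the column immediately to the left of $x$, and this pole must be cancelled against the vanishing numerator $q^{a_\lambda(c)}-t^{l_\lambda(c)+1}$ contributed by the unique arm-zero cell in the row of $x$. Carrying out this $0/0$ cancellation is precisely what determines which tableaux survive --- the subset $\overline{\SYT}^\e_\mu$ cut out by condition~\eqref{condition} --- and yields the closed forms \eqref{eq:RZero} and \eqref{eq:RPositive}; simply ``inspecting'' the $t=1$ formula term by term would fail without this step. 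Relatedly, your claim that at $q=1$ ``each local $q$-weight degenerates to a nonnegative integer'' is inaccurate: the surviving local weights are ratios such as $|c_j(T)-c_i(T)|/|c_j(T)-c_i(T)-1|$, i.e.\ positive rationals rather than integers (in the paper's example a coefficient $16$ decomposes as $8+\tfrac{8}{3}+\tfrac{16}{3}$), and positivity of the $e_\mu$ coefficient follows from positivity, not integrality, of each factor. Finally, the identification with Hikita's formula requires no bijection: after telescoping, the weights $A^{\lambda,w}_x(q,1)$ rewrite directly as the products of $q$-integers in Lemma~\ref{lem:Aq1}, and the indexing sets already coincide, so the matching is a term-by-term algebraic rewriting rather than the combinatorial obstacle you anticipate.
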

The coefficient of $e_\mu$ in $\chi_\e[X;q]$ can be counted by certain admissible proper colorings, see Corollary~\ref{cor:main_corollary} and Remark~\ref{rmk:colorings_e-expansion}.

It would be interesting to see if there is any connection between this formula and the $(q,t)$-chromatic symmetric functions very recently defined by Hikita~\cite{Hikita-qt}.

Note that $\Ht_{\mu'}[X;q,t]$ can be expanded into LLT polynomials~\cite{HHL}, and the integral form $J_{\mu}[X;q,t]$ can then be expanded into weighted sums of $q$-chromatic symmetric functions, see \cite{Haglund-Wilson} and \cite[Example 3.10]{Carlsson-Mellit}. In view of \eqref{eq:HtoJ}, our formula gives $q$-chromatic symmetric functions in terms of $J_\mu$. Combining the two, first expanding $J_\mu$ into $q$-chromatics and then back into $J_\mu$, one can obtain a curious formula for the identity matrix. 

\section{Background}

\subsection{Symmetric functions}
Let $\Lambda[X]$ be the space of symmetric functions with coefficients in $\mathbb{C}(q,t)$. Denote by $e_n$ and $h_n$ the elementary and complete homogeneous symmetric functions:
\begin{align*}
e_n=\sum_{i_1<\cdots<i_n}x_{i_1}\cdots x_{i_n},&& \text{ and } &&
h_n=\sum_{i_1\le \cdots\le i_n}x_{i_1}\cdots x_{i_n}.
\end{align*}
Given any integer partition $\lambda = (\lambda_1\geq \lambda_2\geq \cdots \geq \lambda_\ell>0)$, let $e_\lambda \coloneqq \prod_i e_{\lambda_i}$ and $h_\lambda \coloneqq \prod_i h_{\lambda_i}$. By standard symmetric function theory, $\{e_\lambda: \lambda\}$ and $\{h_\lambda: \lambda\}$ are bases of $\Lambda[X]$. We also denote by $\{s_\lambda:\lambda\}$ the \emph{Schur function} basis of $\Lambda[X]$. We denote by $\omega$ the involution on $\Lambda[X]$ which sends $h_\lambda$ to $e_\lambda$ or $s_\lambda$ to $s_{\lambda'}$, where $\lambda'$ is the conjugate of $\lambda$.  See~\cite{Macdonald-book} for more background on symmetric function theory.

In what follows, we will also make light use of ``plethystic notation'' and the theory of modified Macdonald polynomials. One can see \cite{Carlsson-Mellit} or \cite{GHT}, for example, for a more detailed explanation of these operations.

\subsection{The $\mathbb{A}_{q,t}$ algebra}

Our main method is to use the $\bA_{q,t}$ algebra, which was introduced by Carlsson and Mellit \cite{Carlsson-Mellit} to prove the Shuffle Theorem \cite{HHLRU} and its compositional refinement \cite{Haglund-Morse-Zabrocki}.

Let $V_k = \Lambda[X] \otimes \mathbb{C}[y_1,\dots,y_k]$. 
We will only need the following operators appearing in the $\mathbb{A}_{q,t}$ algebra.
\begin{definition}
For any expression $F(y_i, y_{i+1})$, let
\[
T_i F  = \frac{(q-1)y_i F(y_i,y_{i+1}) + (y_{i+1} - q y_i) F(y_{i+1},y_i)}{y_{i+1}-y_i}.
\]
Define $d_+:V_{k} \rightarrow V_{k+1}$ by setting
\[
d_+ F[X] = T_1 \cdots T_k F[X+(q-1) y_{k+1}],
\]
and define $d_- :V_{k+1} \rightarrow V_{k}$ by setting
\[
d_-F[X] = - F[X-(q-1)y_k] \Exp\left[ - y_k^{-1} X\right] \Big|_{y_k^{-1}},
\]
where $\Exp[X] = \sum_{n \geq 0} h_n[X]$ is the plethystic exponential, and $f\big|_{y_k^{-1}}$ means take the coefficient of $y_k^{-1}$ in $f$.
\end{definition}

\subsection{$q$-chromatic symmetric functions}

Given a finite graph $G$ with vertices $V = \{v_1,\dots, v_m\}$ and edges $E$, a \textbf{proper coloring of $G$} is a function $\kappa:V\to \mathbb{Z}_{>0}$ such that $\kappa(v)\neq \kappa(w)$ whenever $vw\in E$. Given a proper coloring $\kappa$ (and the ordering on $V$), the \textbf{ascents} of $\kappa$ are
\[
\asc(\kappa) \coloneqq \# \{(i<j) : v_i v_j\in E,\, \kappa(v_i) < \kappa(v_j)\}.
\]
The \textbf{$q$-chromatic symmetric function} introduced by Shareshian and Wachs~\cite{Shareshian-Wachs} is given by
\[
\chi_G[X;q] \coloneqq \sum_{\kappa}q^{\asc(\kappa)} x^\kappa,
\]
where the sum is over all proper colorings of $G$ and $x^\kappa = \prod_i x_{\kappa(v_i)}$. 

A \textbf{unit interval graph} is a particular type of graph whose $q$-chromatic symmetric function has particularly nice properties. One equivalent way of defining such a graph is as follows: Let $[n]=\{1,\ldots,n\}$ and $\e: [n]\to \{0,1,\dots, n-1\}$ be a \textbf{reverse hessenberg function}, which is a function that is weakly increasing such that $\e(i) < i$ for all $i$. Then the unit interval graph $\Gamma=\Gamma_\e$ is the graph with vertex set $V = [n]$ and edges $\{ij: \e(j) < i < j\}$. We will write $\chi_\e[X;q]=\chi_{\Gamma_\e}[X;q]$.

Before Hikita's proof of $e$-positivity, it was known that $\chi_\e[X;q]$ is a Schur positive symmetric function by Shareshian and Wachs \cite{Shareshian-Wachs} who refined a formula for $\chi_\e[X;1]$ of Gasharov~\cite{Gasharov}. Brosnan and Chow~\cite{Brosnan-Chow} subsequently proved that $\chi_\e[X;q]$ is the Frobenius characteristic of the dot action of $S_n$ on the singular cohomology ring of a regular semisimple Hessenberg variety (up to tensoring by the sign representation), a conjecture that was posed by Shareshian and Wachs, giving another proof of Schur positivity.

A Dyck path $D= W^{a_1} S^{b_1} \cdots W^{a_\ell} S^{b_\ell}$  is a sequence of West and South unit lattice steps satisfying
\begin{enumerate}
    \item $a_1 - b_1 +\cdots + a_r - b_r \geq 0$ for all $r$, and
    \item $a_1+\cdots + a_\ell  = b_1 +\cdots + b_\ell = n$, for which we then say that $D \in D_n$ is a Dyck path in the $n \times n$ square.
\end{enumerate}
We visualize a Dyck path as a lattice path starting at $(n,n)$ and taking West $(-1,0)$ and South $(0,-1)$ steps according to the word $D$, which stays weakly above the diagonal of the $n\times n$ grid by (1) and ends at $(0,0)$ by (2). 

Let $\e_D(i)$ be the number of South steps preceding the $i$-th West step of $D$. In this way, we associate to every $D\in D_n$ a reverse Hessenberg function $\e_D$ of length $n$. Any reverse Hessenberg function arises from a unique Dyck path in this way. So we will identify Dyck paths with reverse Hessenberg functions and write $\e\in D_n$. See Figure~\ref{fig:Dyckpath} for $D$ and the unit interval graph corresponding to $\e = (0,1,1,2,2,3,4)$. 
\\

\begin{figure}[t]
\centering
\includegraphics{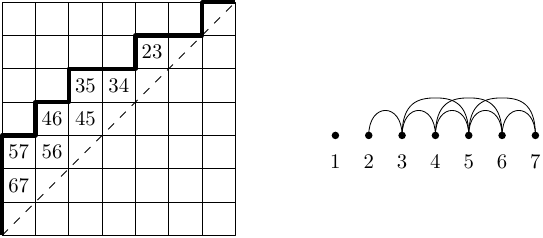}
\caption{The Dyck path $D$ and unit interval graph $\Gamma_\e$ associated to the reverse hessenberg function $\e = (0,1,1,2,2,3,4)$.
Cells are labeled by the edges, $ij$, present in the graph.}
\label{fig:Dyckpath}
\end{figure}

Carlsson and Mellit showed that the $q$-chromatic symmetric function can be obtained from $d_+$ and $d_-$ by first writing
\[
F_D[X;q]  \coloneqq d_-^{b_\ell} d_+^{a_\ell} \cdots d_-^{b_1} d_+^{a_1} (1),
\]
then setting
\[
\chi_D[X;q] \coloneqq (q-1)^{-n} F_D[(q-1)X].
\]
\begin{prop}[{\cite[Theorem 4.4$+$Proposition 3.5]{Carlsson-Mellit}}]
For any reverse Hessenberg function $\e$ with the corresponding Dyck path $D \in D_n$, we have
\[
\chi_D[X;q] = \chi_{\e}[X;q].
\]
\end{prop}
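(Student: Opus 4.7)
The plan is to pass through LLT polynomials, which mediate between the operator-theoretic construction of $F_D$ and the combinatorial definition of $\chi_\e$. This matches the two-part citation [Carlsson--Mellit, Theorem 4.4 + Proposition 3.5].

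First, I would prove by induction on the length of the factorization $D = W^{a_1}S^{b_1}\cdots W^{a_\ell}S^{b_\ell}$ that $F_D[X;q]$ coincides with the LLT polynomial $\mathrm{LLT}_D[X;q]$ associated to the Dyck path $D$, viewed as a tuple of single cells placed according to the reverse Hessenberg function. The base case $D = \emptyset$ reduces to $F_\emptyset = 1$. For the inductive step, one interprets $d_+$ as a Hecke-deformed creation operator in which the plethystic shift $X \mapsto X + (q-1)y_{k+1}$ installs a new ``west column'' in the LLT tuple, while the product $T_1 \cdots T_k$ supplies the $q$-symmetrization that is compatible with the $\inv$ statistic; dually, $d_-$ extracts the coefficient of $y_k^{-1}$, which removes the top entry while recording the correct $q$-weight. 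The verification amounts to expanding both sides in the monomial basis and matching the resulting weighted sums over tuples of fillings.

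Second, I would apply the plethystic identity
\[
\chi_\e[X;q] \;=\; (q-1)^{-n}\,\mathrm{LLT}_D[(q-1)X;\,q],
\]
originally due to Carlsson--Mellit (compare Haglund--Wilson). Its proof is combinatorial: after substituting $X \mapsto (q-1)X$, the monomial expansion of $\mathrm{LLT}_D$ collapses, via a sign-reversing cancellation between ``positive'' and ``negative'' parts of the plethystic alphabet, onto those semistandard fillings that descend to proper colorings of $\Gamma_\e$; simultaneously, the LLT $\inv$ statistic restricts to precisely the ascent statistic $\asc(\kappa)$. Combining the two steps gives $\chi_D[X;q] = (q-1)^{-n} F_D[(q-1)X;q] = \chi_\e[X;q]$.

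The main obstacle is the first step: establishing that the alternating composition of $d_+$ and $d_-$ reconstructs the LLT polynomial. One must carefully track how the Hecke-type operators $T_i$ interact with the plethystic alphabet shifts by $(q-1)y_i$, and how the $y_k^{-1}$-coefficient extraction in $d_-$ correctly bookkeeps both the removal of particles and the $q$-weight contributions from inversions that cross the removed particle. Once this identification with $\mathrm{LLT}_D$ is secured, the second step is a relatively standard unpacking of the ribbon-tableau combinatorics defining LLT polynomials, and the conclusion follows.
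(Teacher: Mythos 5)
This proposition is quoted in the paper without proof, as an import of Carlsson--Mellit's Theorem 4.4 (identifying $F_D$ with the unicellular LLT polynomial of $D$ via the $d_\pm$ recursion) combined with their Proposition 3.5 (the plethystic substitution $X\mapsto (q-1)X$ relating that LLT polynomial to $\chi_\e[X;q]$), and your two-step outline reproduces exactly this decomposition. Your sketch of each ingredient is consistent with how those results are actually established, so the approach is essentially the same as the one the paper relies on.
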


We will write $F_\e$ for the corresponding function $F_D$ where $D$ is such that $\e=\e_D$.
For a reference on LLT polynomials and $q$-chromatic symmetric functions, see \cite{Alex-Panova}.
\subsection{Macdonald symmetric functions}
The modified Macdonald polynomials $\Ht_\mu[X;q,t] = \Ht_\mu[X]$ specialize to modified homogeneous symmetric functions:
\[
\Ht_\mu[X;q,1] = (q;q)_\mu h_\mu \left[ \frac{X}{1-q} \right],
\]
where $(q;q)_\mu = (q;q)_{\mu_1} \cdots (q;q)_{\mu_{\ell(\mu)}} = h_\mu[1/(1-q)]^{-1}$, and $(q;t)_r= (1-q)(1-q t)\cdots (1-q t^{r-1})$ is the Pochhammer symbol. 
Therefore, 
\[
\Ht_\mu[(q-1)X;q,1] = (-1)^{|\mu|} (q;q)_\mu e_\mu[X],
\]
and as a consequence, the elementary basis expansion of $\chi_\e[X;q]$ can be found by the expansion of $F_\e$ in the basis $\{h_\mu[X/(1-q)]\}_\mu$. 

Our goal will be to first write $F_\e$ in terms of the modified Macdonald basis:
\[
F_\e[X;q] = \sum_{\mu} C_{\e,\mu}(q,t) ~\Ht_\mu[X;q,t].
\]
Since $\chi_\e[X;q]$ has no $t$ variable, we must then also have
\begin{align*}
F_\e[X;q] &= \sum_{\mu} C_{\e,\mu}(q,1) ~\Ht_\mu[X;q,1]
\\
&= \sum_{\mu} C_{\e,\mu}(q,1) (q;q)_\mu h_\mu\left[ \frac{X}{1-q} \right].
\end{align*}
From here, it follows that 
\begin{equation}\label{eq:ChiEExpansion}
\chi_\e[X;q] = \sum_{\mu} C_{\e,\mu}(q,1) (1-q)^{-|\mu|}(q;q)_\mu e_\mu[X],
\end{equation}
where we have used that $h_\mu[-X] = (-1)^{|\mu|}\omega (h_\mu)[X] = (-1)^{|\mu|}e_\mu$.

The Stanley--Stembridge Conjecture then follows from combining~\eqref{eq:ChiEExpansion} with the next proposition.
\begin{prop}\label{prop:MainProp}
For any reverse Hessenberg function $\e$ of length $n$ and partition $\mu$ of $n$, $C_{\e,\mu}(q,1)$ is positive when $q=1$. 
Furthermore, $C_{\e,\mu}(q,1) (1-q)^{-|\mu|}(q;q)_\mu$ equals the formula for the coefficient of $e_\mu$ given in \cite{Hikita}.
\end{prop}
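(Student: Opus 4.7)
The plan is to unwind the explicit tableau formula for $C_{\e,\mu}(q,t)$ promised in Theorem~\ref{thm:IntroMainThm}, specialize it to $t=1$, and then match the $q=1$ answer term-by-term with Hikita's.

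The first step is to derive the formula by iterating the operators in $\bA_{q,t}$. Writing $F_\e = d_-^{b_\ell}d_+^{a_\ell}\cdots d_-^{b_1}d_+^{a_1}(1)$ following the Dyck path $D$ with $\e=\e_D$, I would apply the explicit matrix coefficients of $d_\pm$ in the generalized Macdonald basis computed in \cite{Carlsson-Gorsky-Mellit}. Each West step acts as a ``creation'' (growing the partition by a prescribed box), each South step as an ``annihilation'' (removing a box weighted by a ratio of Macdonald-style arm/leg factors). Composing these along $D$ produces a sum indexed by sequences of intermediate partitions; these sequences are naturally repackaged as fillings $T$ of the boxes of $D$, giving $C_{\e,\mu}(q,t) = \sum_T w_T(q,t)$ where $w_T$ is a product of the local creation/annihilation weights.

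The second step is to specialize $t=1$. Using $\Ht_\mu[X;q,1] = (q;q)_\mu h_\mu[X/(1-q)]$, the identity~\eqref{eq:ChiEExpansion} converts the $h$-expansion of $F_\e$ into the $e$-expansion of $\chi_\e[X;q]$, giving the coefficient of $e_\mu$ as $C_{\e,\mu}(q,1)(1-q)^{-|\mu|}(q;q)_\mu$. At $t=1$ most of the $(1-q^a t^b)$ factors in $w_T$ collapse, and I would check that the apparent pole of $(1-q)^{-|\mu|}$ cancels against zeros appearing in $w_T(q,1)$, so that the full coefficient is a rational function of $q$ with no pole at $q=1$. Evaluating at $q=1$ then reduces each surviving factor to a ratio of positive integers (essentially hook-like products), which proves the positivity claim.

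The third step, and the main obstacle, is to identify the resulting sum with Hikita's formula. I would build an explicit bijection between the tableaux $T$ arising from our iterated $\bA_{q,t}$ calculation and Hikita's admissible proper colorings of $\Gamma_\e$: each West step of $D$ is a vertex of $\Gamma_\e$ and the partition recorded by $T$ at that step should correspond to the multiset of colors used so far, with the intervening South steps prescribing how the new box (color) fits into the existing configuration compatibly with the edges $ij$ with $\e(j)<i<j$. Under this bijection I would verify the weights match factor-by-factor at $q=1$. The difficulty is that Hikita's parametrization comes from direct graph coloring while ours comes from operator calculus in a modified Macdonald basis; aligning the two requires carefully tracking how each $d_+/d_-$ weight at $t=1$, $q=1$ corresponds to Hikita's ascent/descent statistics and color-placement factors. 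Once this matching is in place, both halves of the proposition follow simultaneously.
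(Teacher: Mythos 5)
Your overall strategy --- iterate the $\bA_{q,t}$ operators along the Dyck path using the matrix coefficients of \cite{Carlsson-Gorsky-Mellit}, collect the result as a tableau-indexed sum of product weights, specialize $t=1$, and match with Hikita --- is exactly the route the paper takes (Proposition~\ref{prop:Aqt-action} through Lemma~\ref{lem:Aq1}). However, your description of the operator calculus contains a concrete error that would derail the computation if carried out literally: in the fixed-point basis $I_{\la,w}$, the South-step operator $d_-$ does \emph{not} remove a box from the partition and carries no arm/leg weight at all; it simply deletes the last entry of the marking, $d_- I_{\la,wx}=I_{\la,w}$. All arm/leg factors and the cross-terms $\prod_i (x-tw_i)/(x-qtw_i)$ come from $d_+$, which both grows $\la$ by an addable box $x$ and prepends $x$ to $w$. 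Since there are $n$ steps of each kind, your creation/annihilation picture would terminate at the empty partition rather than at a partition of size $n$. The correct bookkeeping records only the increasing chain of partitions built by the $d_+$'s, so the index set is $\SYT^\e_\la$ (standard Young tableaux for which each $T_{\preceq i}$ is a horizontal strip), not fillings of the boxes of $D$.

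The step you defer with ``I would check that the apparent pole cancels'' is where the real content lies, and the relevant cancellation happens at $t=1$, not at $q=1$: the factors $(x-tw_i)/(x-qtw_i)$ acquire poles at $t=1$ precisely when some $w_i$ sits in the column immediately left of $x$, and these must cancel against the zero $(1-t^{l_\la(c)+1})$ contributed by the unique arm-zero cell in the row of $x$. Working this out is what produces the vanishing criterion \eqref{condition}, i.e.\ $A_T^\e(q,1)\neq 0$ exactly for $T\in\overline{\SYT}^\e_\mu$, and the surviving weights are explicit ratios of $q$-integers as in Corollary~\ref{cor:main_corollary}; regularity and positivity at $q=1$ then come for free, since $(1-q)^{-|\mu|}(q;q)_\mu=\prod_i[\mu_i]_q!$ has no pole. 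Your proposed dictionary with admissible colorings is sound (it is Remark~\ref{rmk:colorings_e-expansion}, via $i\mapsto c_i(T)+1$), but note that the paper's actual identification with Hikita's formula proceeds instead by resumming the weights over contiguous segments of marked columns (Lemma~\ref{lem:Aq1}) and comparing directly with his tableau sum, so the coloring bijection is not strictly needed for that step.
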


\subsection{The action of $d_{\pm}$ on $I_{\la,w}$}
We now think of partitions as being given by a sum of cells. We say $(i,j) \in \lambda$ if $0\leq j <\ell(\lambda)$ and $0 \leq i < \lambda_{j+1}$. To simplify notation, we will then write
\[
\lambda = \sum_{(i,j) \in \lambda} q^i t^j.
\]
For a given $\lambda$, we say $x = q^r t^s$ is an \textbf{addable cell} if $\lambda+x = \mu$ is a partition, and we will then write $x \in \Add(\lambda)$. The cells in $\lambda$ that are in the same row as $x$ are denoted by $\mathcal{R}_{\lambda,x}$; and similarly, the cells in $\lambda$ that are in the same column as $x$ are denoted by $\mathcal{C}_{\lambda,x}$. For a given cell $c \in \lambda$, let $a_\lambda(c)$ denote the arm length of $c$ in $\lambda$ (that is, the number of cells to its right), and let $l_\lambda(c)$ be its leg length (that is, the number of cells above $c$ in the French convention).

If $x \in \Add(\lambda)$, we let
\[
d_{\lambda,x}= \prod_{c\in \mathcal{R}_{\lambda,x}} \frac{ q^{a_\lambda(c)} - t^{l_{\lambda}(c)+1}}{q^{a_\lambda(c)+1} - t^{l_\lambda(c)+1}}
\prod_{c\in \mathcal{C}_{\lambda,x}} \frac{ q^{a_\lambda(c)+1} - t^{l_{\lambda}(c)}}{q^{a_\lambda(c)+1} - t^{l_\lambda(c)+1}}.
\]
These terms appear in the Pieri coefficients for Macdonald polynomials \cite{GHXZ}:
\[
e_1 \Ht_\lambda[X] = \sum_{x\in\Add(\lambda)} d_{\lambda,x} \Ht_{\lambda+x}[X].
\]

A basis for $V_k$ is given by certain elements of the form $I_{\lambda,w}$, where $w = (w_1,\dots, w_k)$ is a sequence of cells in $\lambda$ which form a horizontal strip, meaning no two cells are in the same column and each row is connected and right justified in $\lambda$. The list is written such that $\lambda - w_1 - w_{2} - \cdots - w_{s}$ forms a partition for each $s\leq k$, meaning that if we place $i$ in the cell $w_{k-i +1}$, we get a standard tableau of the horizontal strip $w_1+\cdots + w_k$. 

In~\cite{Carlsson-Gorsky-Mellit}, Carlsson, Gorsky, and Mellit construct an action of $\bA_{q,t}$ on the direct sum of the (localized) equivariant $K$-theory of parabolic flag Hilbert schemes, $K_{\bC^*\times \bC^*}(\mathrm{PFH}_{n,n-k})$; moreover, they show that
\[
\bigoplus_{k}K_{\bC^*\times \bC^*}(\mathrm{PFH}_{n,n-k})\cong\bigoplus_k V_k
\]
as $\bA_{q,t}$-modules.
The basis $\{I_{\lambda,w}\}$ corresponds to the distinguished basis of $K_{\bC^*\times \bC^*}(\mathrm{PFH}_{n,n-k})$ given by the classes of torus fixed points. It will not be important to know explicitly what the elements $I_{\lambda,w}$ are. Instead, we rely on the following three facts. 
\begin{prop}[\cite{Carlsson-Gorsky-Mellit}] \label{prop:Aqt-action}
Given a partition $\lambda$ and sequence $w=(w_1,\dots,w_k)$ forming a horizontal strip (as described above), we have
\begin{gather}
I_{\lambda,\emptyset} = \Ht_\lambda[X]/\Ht_\lambda[-1] \in V_0,\\
d_- I_{\lambda,wx}= I_{\lambda,w}, \text{ and } \\
d_+ I_{\lambda,w} = -q^k \sum_{x\in \Add(\lambda)} x d_{\lambda,x} \left(\prod_{i=1}^k \frac{x-tw_i}{x-qtw_i} \right) I_{\lambda +x,xw}.
\end{gather}
\end{prop}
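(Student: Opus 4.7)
The plan is to follow the geometric framework of Carlsson--Gorsky--Mellit, which realizes $\bigoplus_k V_k$ as the $\bA_{q,t}$-module coming from the equivariant K-theory of the parabolic flag Hilbert schemes $\mathrm{PFH}_{n,n-k}$. First I would set up the geometry carefully: points of $\mathrm{PFH}_{n,n-k}$ parametrize a length-$n$ ideal $I\subset\bC[x,y]$ together with a flag of successive colength-one overideals $I\subset I_1\subset\cdots\subset I_k$. The torus $(\bC^*)^2$ acts on the plane with weights $q,t$, and the induced action on $\mathrm{PFH}_{n,n-k}$ has isolated fixed points, one per pair $(\lambda,w)$ where $\lambda$ is a monomial ideal of colength $n$ and $w=(w_1,\dots,w_k)$ records the horizontal-strip sequence of cells added to build the flag above $I_\lambda$. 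The basis element $I_{\lambda,w}$ is, up to a chosen normalization, the structure sheaf class of that fixed point.

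Next I would identify the $d_\pm$ operators geometrically. The operator $d_-$ is essentially pullback along the forgetful map $\mathrm{PFH}_{n,n-k-1}\to \mathrm{PFH}_{n,n-k}$ which drops the innermost ideal of the flag; since this map sends the fixed point $(\lambda,wx)$ to $(\lambda,w)$, the identity $d_- I_{\lambda,wx}=I_{\lambda,w}$ simply records that fact. For the base case $k=0$, the scheme is $\mathrm{Hilb}^n(\bC^2)$, and Haiman's identification of its torus-equivariant K-theory with $\Lambda[X]$ sends the fixed-point class at $\lambda$ precisely to $\Ht_\lambda[X]/\Ht_\lambda[-1]$, where $\Ht_\lambda[-1]$ is the K-theoretic Euler class of the (co)tangent space at $\lambda$.

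The main work is the third formula. By Atiyah--Bott--Thomason equivariant localization, applying $d_+$ to $I_{\lambda,w}$ produces a sum over the fixed points of the enlarged flag scheme lying over $(\lambda,w)$, weighted by ratios of equivariant Euler classes of normal bundles. These ratios factor cleanly into two pieces: a Hilbert-scheme piece, depending only on the new addable cell $x\in\Add(\lambda)$, which reproduces Macdonald's Pieri coefficient $d_{\lambda,x}$ already computed in \cite{GHXZ}; and a flag piece, measuring how the weight of $x$ interacts with each $w_i$ through the characters of the tautological line bundles attached to the flag, which produces the factor $\prod_{i=1}^k (x-tw_i)/(x-qtw_i)$. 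The external $-q^k$ and leading $x$, as well as the overall sign, arise from the normalization of $d_+$ as $T_1\cdots T_k$ composed with the plethystic shift $X\mapsto X+(q-1)y_{k+1}$, tracked through Haiman's localization formula.

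The main obstacle is to faithfully translate the algebraic definition of $d_+$, involving the Demazure--Lusztig operators $T_i$ and plethystic substitutions, into the geometric language of pullback along a forgetful map twisted by a tautological class. This requires matching the $T_i$ action on $\Lambda[X]\otimes\bC[y_1,\dots,y_k]$ with the affine Hecke algebra action on the K-theory of the flag factors of $\mathrm{PFH}_{n,n-k}$, and keeping careful track of signs, $q$-powers, and $t$-weights throughout. Once that dictionary is in place, the three assertions all follow from Atiyah--Bott--Thomason localization combined with the Macdonald Pieri rule.
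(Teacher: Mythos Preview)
The paper does not prove this proposition at all: it is quoted as a black-box result from \cite{Carlsson-Gorsky-Mellit}, with the explicit disclaimer that ``it will not be important to know explicitly what the elements $I_{\lambda,w}$ are.'' So there is no ``paper's own proof'' to compare against.

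Your sketch is a reasonable outline of the Carlsson--Gorsky--Mellit geometric argument, and the pieces you identify (fixed points of the torus action on $\mathrm{PFH}_{n,n-k}$ indexed by $(\lambda,w)$, forgetful maps realizing $d_-$, localization producing the Pieri coefficient $d_{\lambda,x}$ times the flag contribution $\prod_i(x-tw_i)/(x-qtw_i)$) are indeed the ingredients of their proof. But you should be aware that in the context of the present paper no proof is expected or given; the proposition is simply imported. If you were asked to supply a proof here, the honest answer is to cite \cite{Carlsson-Gorsky-Mellit} and, if pressed, point to the specific results there (their construction of the $\bA_{q,t}$-module structure on $\bigoplus_k K_{\bC^*\times\bC^*}(\mathrm{PFH}_{n,n-k})$ and the explicit matrix coefficients of $d_\pm$ in the fixed-point basis).
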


We will also need the following identity:
\begin{equation}\label{eq:HAtMinus1}
\Ht_\mu[-1] = (-1)^{|\mu|}\prod_{(i,j)\in \mu} q^i t^j.
\end{equation}

\section{Main proof}

In this section, we prove Theorem~\ref{thm:IntroMainThm} using the $\bA_{q,t}$ algebra.

\begin{notation}
Let $\e$ be a reverse Hessenberg function.
 If $ \e(j) < i < j,$ then we will write $i \prec j$. Note that  $\prec$ is not a partial order since it is not necessarily transitive.
\end{notation}
For example, in the case $\e=(0,1,1,2,2,3,4)$ considered in Figure \ref{fig:Dyckpath}, the pairs $i\prec j$ are given by the edges of $\Gamma_\e$. Namely, $2\prec 3$, $3\prec 4$, $3\prec 5$ and so on. This function corresponds to $\e=\e_D$ for the Dyck path $D=WSWWSWWSWSWSSS$, and the pairs $i\prec j$ are in bijection with the cells in the $7\times 7$ grid between the diagonal and $D$. 

Let $\SYT_\mu$ be the set of standard Young tableaux of shape $\mu$. 
For a given $j$, let $T_{< j}$ be the standard tableau given by the entries smaller than $j$. We will also let $T_{\prec j}$ be the skew-shaped tableau with entries $i$ satisfying $i \prec j$. 
Similarly, define $T_{\leq j}$ and $T_{\preceq j}$ to include the entry $j$. 
We will say that a skew shape is {\emph{strict}} if its entries are increasing along rows and columns. 
A strict skew tableau $T$ gives a sequence $w(T) = (w_1,\dots,w_k)$ of cells, where $w_1$ is the cell with the largest entry, $w_2$ is the cell with the second largest entry, and so on until we reach the cell $w_k$ with the smallest entry.
For a strict skew tableau, we will also write $\shape(T)$ to denote the skew diagram with no fillings.
\\

First, we write 
\[F_\e[X;q]= d_-^{r_n} d_+ d_-^{r_{n-1}} \cdots d_+ d_-^{r_1} d_+ (I_{\emptyset,\emptyset}),
\]
where $r_1+\cdots +r_j = \e(j+1)$, $r_1+\cdots+r_n=n$. Note that $\e(1)=0$. 
Suppose that after we apply $d_+ d_-^{r_{j-1}} \cdots d_-^{r_1} d_+ $ we arrive at a term with $I_{\lambda, w}$. Applying $d_{-}^{r_j}$ eliminates $r_j$ elements from $w$ giving $w'$, leaving only the last $j - \e(j+1)$ cells which were added to $\lambda$; and $j - \e(j+1)$ is the number of $i \prec j+1$. Applying $d_+$ then adds a cell $x$ to $\lambda$ so that the sequence $xw$ forms a strict horizontal strip. 
We interpret this combinatorially in the following way.

Let
\begin{equation}
 A^{\lambda,w}_x \coloneqq 
  \prod_{c\in \mathcal{R}_{\lambda,x}} \frac{ q^{a_\lambda(c)} - t^{l_{\lambda}(c)+1}}{q^{a_\lambda(c)+1} - t^{l_\lambda(c)+1}}
\,\times\,\prod_{c\in \mathcal{C}_{\lambda,x}} \frac{ q^{a_\lambda(c)+1} - t^{l_{\lambda}(c)}}{q^{a_\lambda(c)+1} - t^{l_\lambda(c)+1}}
\,\times\, \prod_{i=1}^k \frac{x-tw_i}{x-qtw_i}.\label{eq:Axlamw}
\end{equation}
Let $\SYT^\e_\lambda$, be the set of standard tableaux $T$ such that for each $i$, $T_{\preceq i}$ is a horizontal strip. 
\begin{rmk}\label{rmk:ll increase}
We may alternatively define $\SYT^\e_\la$ as follows. Define a partial order on $[n]$, called the \emph{unit interval order}, by $i\ll j$ if $i \leq \e(j)$. Then $\SYT^\e_\la$ is the set of standard tableaux whose rows increase with respect to $<$ and whose columns increase with respect to $\ll$.
\end{rmk}
Let $T^{(i)} \coloneqq \shape(T_{\leq i})- \shape(T_{<i})$ be the cell containing $i$, and let
\[
A_{T}^{\e}(q,t) \coloneqq \prod_{i=1}^n A^{\shape(T_{< i}),w(T_{\prec i})}_{T^{(i)}}.
\]
Lastly, note that
\[
\prod_{i=1}^n -q^{|T_{\prec i}|} T^{(i)}
= q^{\binom{n}{2}-|\e|} \Ht_{\lambda}[-1],
\]
where $|\e| = \sum_i \e(i)$. Proposition \ref{prop:Aqt-action} then tells us that
\[
F_\e[X;q] = \sum_\lambda \Ht_\lambda[X]/\Ht_{\lambda}[-1] ~ \left(  \sum_{T \in \SYT^\e_\lambda}  \prod_{i=1}^n -q^{|T_{\prec i}|} T^{(i)} A^{\shape(T_{< i}),w(T_{\prec i})}_{T^{(i)}}  \right).
\]
This gives us the next proposition.
\begin{prop}\label{prop:FDExpansion}
We have the following expansions of $F_\e$ and $\chi_\e$:
\begin{align}\label{eq:MasterFormulaF}
F_\e[X;q] &= q^{\binom{n}{2}-|\e|}~ \sum_\lambda ~
 \left(\sum_{T \in \SYT^\e_\lambda} A_T^{\e}(q,t)\right) \Ht_\lambda[X;q,t],\\
\chi_\e[X;q] &= (q-1)^{-n} q^{\binom{n}{2}-|\e|}~ \sum_\la ~  \left(\sum_{T \in \SYT^\e_\lambda} A_T^{\e}(q,t)\right) \Ht_\la[(q-1)X;q,t].\label{eq:MasterFormulaChi}
\end{align}
\end{prop}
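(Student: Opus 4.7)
The plan is to iterate the three formulas of Proposition~\ref{prop:Aqt-action} on the expression
\[
F_\e[X;q] = d_-^{r_n} d_+ d_-^{r_{n-1}} d_+ \cdots d_-^{r_1} d_+ (I_{\emptyset,\emptyset}),
\]
with $r_1 + \cdots + r_j = \e(j+1)$, and to collect coefficients. Each $d_+$ introduces a sum over an addable cell $x$ and prepends it to the horizontal-strip index $w$; each $d_-$ pops the rightmost (smallest-labelled) entry of $w$. Tracking the cells added by the $n$ copies of $d_+$ builds up a standard Young tableau $T$ of some shape $\la$, and the task is to verify that the set of contributing tableaux is precisely $\SYT^\e_\la$ and that the coefficients collapse to $A_T^\e(q,t)$ times the stated prefactor.

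The first step is to identify the tableaux that arise. I would argue inductively: after the $j$-th $d_+$ and the subsequent $d_-^{r_j}$ block have been processed, the surviving entries of $w$ are exactly the cells added at steps $i$ with $\e(j+1) < i \leq j$. When the $(j+1)$-st $d_+$ prepends $T^{(j+1)}$, the horizontal-strip requirement in the $d_+$ formula says that these cells together with $T^{(j+1)}$ form a horizontal strip. Since these are exactly the cells labelled by $i \preceq j+1$, the condition is that $T_{\preceq j+1}$ is a horizontal strip, which is the defining condition of $\SYT^\e_\la$ (cf.\ Remark~\ref{rmk:ll increase}).

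The second step is to compute the scalar attached to each $T$. At the $i$-th application of $d_+$, the current strip length is $|w| = (i-1) - \e(i) = |T_{\prec i}|$, so the formula contributes
\[
-q^{|T_{\prec i}|}\, T^{(i)} \cdot d_{\shape(T_{<i}),T^{(i)}} \prod_{w_j \in w(T_{\prec i})} \frac{T^{(i)} - t w_j}{T^{(i)} - qt w_j}.
\]
Taking the product over $i$ and invoking \eqref{eq:Axlamw} yields $\left(\prod_i -q^{|T_{\prec i}|} T^{(i)}\right) A_T^\e(q,t)$. The final $d_-^{r_n}$ empties $w$ (since $r_n = n - \e(n)$), leaving $I_{\la,\emptyset} = \Ht_\la[X]/\Ht_\la[-1]$. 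To simplify the prefactor I would use $\sum_i |T_{\prec i}| = \sum_i (i-1-\e(i)) = \binom{n}{2} - |\e|$ together with \eqref{eq:HAtMinus1}, which gives $\prod_i T^{(i)} = \prod_{(r,s)\in \la} q^r t^s = (-1)^n \Ht_\la[-1]$. The two signs $(-1)^n$ cancel, so the prefactor becomes $q^{\binom{n}{2}-|\e|}\Ht_\la[-1]$, which absorbs the denominator $\Ht_\la[-1]$ of $I_{\la,\emptyset}$, producing \eqref{eq:MasterFormulaF}. Formula \eqref{eq:MasterFormulaChi} then follows from the plethystic definition $\chi_\e[X;q] = (q-1)^{-n} F_\e[(q-1)X;q]$.

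The main obstacle is simply careful bookkeeping: one must verify that the ordering convention for $w(T)$ (cells listed in decreasing order of tableau entries) matches the side on which $d_+$ prepends and $d_-$ pops, and that the horizontal-strip constraint appearing in the $d_+$ step at the $(j+1)$-st application corresponds exactly to the condition defining $\SYT^\e_\la$ at level $j+1$. Once this alignment is checked, the numerical simplifications of signs, $q$-powers, and the identification with $\Ht_\la[-1]$ via \eqref{eq:HAtMinus1} are routine.
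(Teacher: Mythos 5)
Your proposal is correct and follows essentially the same route as the paper: iterate Proposition~\ref{prop:Aqt-action} along the operator word, identify the surviving cells of $w$ at each stage with $T_{\prec i}$ so that the contributing tableaux are exactly $\SYT^\e_\la$, and collapse the prefactor using $\sum_i |T_{\prec i}| = \binom{n}{2}-|\e|$ together with \eqref{eq:HAtMinus1}. The bookkeeping of the ordering of $w(T)$, the sign cancellation, and the final plethystic substitution all match the paper's argument.
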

Even though we will not use the following version of these expansions, we note here that there is an alternate expression for Proposition \ref{prop:FDExpansion}
which we expect to have a geometric interpretation.
\begin{prop}
We have
\[
F_\e[X;q] = \sum_{\mu}  C_{\e,\mu}(q,t) ~\Ht_\mu[X],
\]
where 
\[
C_{\e,\mu}(q,t) =\frac{q^{\binom{n}{2} - |\e|}}{w_\mu} \sum_{T \in \SYT^\e_\mu}
\prod_i (1-qtT^{(i)}) \prod_{i<j} \frac{1-qT^{(j)}/T^{(i)}}{1-T^{(j)}/T^{(i)}} 
\times \prod_{i \not\gg j} 
\frac{1-tT^{(j)}/T^{(i)}}{1-qtT^{(j)}/T^{(i)}},
\]
and 
\[
w_\mu = \Ht_\mu[-1] \prod_{i} (1-1/T^{(i)})(1-qtT^{(i)}) \prod_{i,j} \frac{(1-qT^{(j)}/T^{(i)})(1-tT^{(j)}/T^{(i)})}
{
(1-T^{(j)}/T^{(i)})(1-qtT^{(j)}/T^{(i)})
}.
\]
The products here are understood by removing all terms in which we have $1-1=0$, since these terms all cancel.
\end{prop}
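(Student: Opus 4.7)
The plan is to directly rewrite the tableau sum in Proposition~\ref{prop:FDExpansion} so that every factor of $A_T^{\e}(q,t)=\prod_{i=1}^n A^{\shape(T_{<i}),w(T_{\prec i})}_{T^{(i)}}$ is expressed as a monomial ratio in the cells $T^{(j)}$ of the final shape $\mu$, and then to collect these contributions across all ordered pairs $(i,j)$. The common denominator $w_\mu$, which depends only on $\mu$, should arise as the symmetrization constant produced by extending the resulting products (initially indexed by pairs sharing a row or column in $\mu$) to products over all pairs $1\le i,j\le n$.

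First I would translate the local arm/leg data into global monomial language: since $\lambda=\sum_{(r,s)\in\lambda}q^rt^s$, if $c=T^{(j)}\in \mathcal{R}_{\shape(T_{<i}),T^{(i)}}$ then $c$ and $T^{(i)}$ share a row of $\mu$, and $q^{a_\lambda(c)+1}$, $t^{l_\lambda(c)+1}$ can be read off as explicit monomial ratios in $T^{(i)}/T^{(j)}$; similarly for $c\in\mathcal{C}_{\shape(T_{<i}),T^{(i)}}$. Each of the row and column Pieri factors then becomes a product of the basic ratios $\frac{1-tT^{(j)}/T^{(i)}}{1-T^{(j)}/T^{(i)}}$ and $\frac{1-qT^{(j)}/T^{(i)}}{1-T^{(j)}/T^{(i)}}$, ranging over pairs $(i,j)$ with $j<i$ that share a row (resp.~column) in $\mu$. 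Similarly, $\prod_{m}\frac{T^{(i)}-tw_m}{T^{(i)}-qtw_m}=\prod_{j\prec i}\frac{1-tT^{(j)}/T^{(i)}}{1-qtT^{(j)}/T^{(i)}}$, since $w(T_{\prec i})$ lists exactly the $T^{(j)}$ with $j\prec i$.

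Next I would multiply and divide by the ``missing'' pair ratios so that the first-type product runs over all $i<j$ in $\mu$ rather than only same-row and same-column pairs. A short bookkeeping calculation, using identity~\eqref{eq:HAtMinus1} in the form $\prod_i T^{(i)}=(-1)^{|\mu|}\Ht_\mu[-1]$, identifies the inserted compensating factors with precisely
\[
w_\mu = \Ht_\mu[-1]\prod_i(1-1/T^{(i)})(1-qtT^{(i)})\prod_{i,j}\frac{(1-qT^{(j)}/T^{(i)})(1-tT^{(j)}/T^{(i)})}{(1-T^{(j)}/T^{(i)})(1-qtT^{(j)}/T^{(i)})},
\]
while leaving behind the stated numerator expression. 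The $1-1=0$ factors in $w_\mu$ (coming from $i=j$ and from the corner cell $T^{(i_0)}=1$) appear simultaneously in the numerator and denominator of the same compensating factors and cancel in pairs, so the final ratio is well-defined.

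The main obstacle is the pair-by-pair accounting: for each ordered pair $(i,j)$ one must track in which of the three products (row, column, $w$) it contributes to $A_T^\e(q,t)$, convert that contribution to monomial form, and verify that after inserting the missing-pair factors the entire expression reassembles into the claimed formula. This is a direct but intricate calculation relying only on the definitions and Proposition~\ref{prop:Aqt-action}, with no new conceptual input required.
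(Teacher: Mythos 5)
The paper states this proposition without proof (it is introduced only as an unused ``alternate expression''), so there is no argument of the authors' to measure yours against; your overall strategy --- start from Proposition~\ref{prop:FDExpansion}, rewrite $A^\e_T(q,t)$ as a product over ordered pairs of cells of $\mu$, and absorb everything $T$-independent into $w_\mu$ --- is surely the intended one. The parts of your plan that are sound are the strip factors, $\prod_{m}\frac{x-tw_m}{x-qtw_m}=\prod_{j\prec i}\frac{1-tT^{(j)}/T^{(i)}}{1-qtT^{(j)}/T^{(i)}}$, the identification $\prod_i T^{(i)}=(-1)^{|\mu|}\Ht_\mu[-1]$, and the observation that enlarging $\prod_{j\prec i}$ to $\prod_{i\not\gg j}$ and $\prod_{i<j}$ to $\prod_{i,j}$ costs only $\mu$-dependent factors.

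The gap is in your first step. It is not true that the row and column Pieri factors can be ``read off as explicit monomial ratios in $T^{(i)}/T^{(j)}$''. For $c=T^{(j)}\in\mathcal{R}_{\lambda,x}$ with $\lambda=\shape(T_{<i})$ and $x=T^{(i)}$, the arm part behaves as you say, $q^{a_\lambda(c)+1}=T^{(i)}/T^{(j)}$, but the leg part satisfies $t^{l_\lambda(c)+1}=y/c$ where $y$ is the empty lattice point just above the top of the column of $c$ in the \emph{intermediate} shape $\lambda$; this $y$ is in general not a cell of $\mu$. Concretely, for $\lambda=(2,1)$, $x=q^2$, $c=1$ the factor is $\frac{q-t^{2}}{q^{2}-t^{2}}$, and $t^{2}$ is not a ratio of cells of $\mu=(3,1)$; one can check that this factor is not a product of the basic ratios $\frac{1-qT^{(j)}/T^{(i)}}{1-T^{(j)}/T^{(i)}}$ and $\frac{1-tT^{(j)}/T^{(i)}}{1-qtT^{(j)}/T^{(i)}}$ attached to fixed pairs of cells. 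So the advertised pair-by-pair accounting cannot get off the ground: the mixed hooks $q^{a}-t^{l+1}$ couple a row pair with a column height, and the conversion to pair-products only happens globally. What is actually needed is the telescoping identity expressing the single Pieri coefficient as $d_{\lambda,x}=\frac{W_\lambda}{W_{\lambda+x}}\times(\text{explicit factors involving only } x \text{ and ratios } c/x,\ c\in\lambda)$, where $W_\nu$ is the all-pairs-and-single-cells product underlying $w_\nu$ (zero factors removed); the product over $i$ then telescopes to $1/w_\mu$ times the stated numerator. Establishing that identity rests on the standard generating-function identity relating $\sum_{c\in\nu}\bigl(q^{a(c)+1}t^{-l(c)}+q^{-a(c)}t^{l(c)+1}\bigr)$ to $(1-q)(1-t)B_\nu\overline{B}_\nu$ plus boundary terms, with $B_\nu=\sum_{c\in\nu}c$. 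That identity is the real content of the proposition, and it is absent from your proposal.
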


Let
$\overline{\SYT}^\e_\mu$ denote all standard tableaux in $\SYT^\e_\mu$ with the following extra condition:
\begin{equation}\label{condition}
\text{For every $i$ not in the first column, there is a $j$ in the column immediately left of $i$ such that $j \prec i$.}\tag{$\ast$}
\end{equation}

\begin{rmk}
\label{rmk:star_and_ll_conditions}
Having condition \eqref{condition} in place, together with the property that the columns increase with respect to $\ll$ (see Remark \ref{rmk:ll increase}), the standard tableau conditions hold automatically. The fact that columns are increasing is clear. To see that the row conditions hold, consider two horizontally neighboring entries $a, i$. By contradiction, suppose $a>i$ and that this pair is the lowest one satisfying this condition (in the French convention). By \eqref{condition}, there exists $j$ in the same column as $a$ satisfying $j\prec i$. In particular, we have $j<i$. If $a\leq j$, then we have $a\leq j<i$, a contradiction. So we have $a>j$. Since $a$ and $j$ are in the same column, we must have $a\gg j$, and $j$ is below $a$. Let $j'$ be the entry directly to the right of $j$. Since $a,i$ have been chosen to be the lowest, we have $j<j'$. This implies $j<j'\ll i$, which contradicts the condition $j\prec i$.
\end{rmk}

\begin{example}\label{ex:tableau331}
When $\e =(0, 1, 1, 2, 2, 3, 4)$,
\begin{align*}
\scalebox{.7}{
\begin{tikzpicture}[scale=1]
\draw (-1,1.5) node {\LARGE $T=$};
\partitionfr{3,3,1};
\draw (.5,.5) node {\LARGE$1$};
\draw (.5,1.5) node {\LARGE$2$};
\draw (1.5,.5) node {\LARGE$3$};
\draw (2.5,.5) node {\LARGE$4$};
\draw (.5,2.5) node {\LARGE$5$};
\draw (1.5,1.5) node {\LARGE$6$};
\draw (2.5,1.5) node {\LARGE$7$};
\end{tikzpicture}}
\end{align*}
is an element of $\SYT^\e_{3,3,1}$ since $T_{\preceq i}$ is a horizontal strip for each $i$. 
In order to satisfy \eqref{condition}, $2$ is forced to be in the first column since $1\not\prec 2$. Furthermore, since $2\prec 3$, $3\prec 4$, $5\prec 6$, and $6\prec 7$, all entries $i$ satisfy condition $\eqref{condition}$. Hence, $T$ is in $\overline{\SYT}^\e_{(3,3,1)}$.
\end{example}

\begin{lem}
For $T \in \SYT^\e_\mu$, $A_T^{\e}(q,1) \neq 0$ if and only if $T \in \overline{\SYT}^\e_\mu$.
\end{lem}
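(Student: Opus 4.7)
My plan is to analyze the three factors comprising $A^{\lambda,w}_x$ individually as rational functions of $t$ over $\bC(q)$, determine their orders of vanishing at $t=1$, and combine the results into a local condition on $(\lambda, w, x)$ that translates directly into \eqref{condition}. Throughout, fix an addable cell $x = q^r t^s \in \Add(\lambda)$. In the row factor, the unique $c \in \mathcal{R}_{\lambda, x}$ with arm length zero is the rightmost cell of row $s$, namely $(r-1,s)$, which exists iff $r \geq 1$; it contributes numerator $1 - t^{l+1}$ (with $l = l_\lambda((r-1,s))$) having a simple zero at $t=1$. All other numerators $q^{a} - t^{l+1}$ with $a \geq 1$ and all denominators $q^{a+1} - t^{l+1}$ remain nonzero in $\bC(q)$ upon specialization. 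A parallel check shows the column factor is finite and nonzero at $t=1$: the only potentially singular term is the leg-zero cell $(r, s-1)$, where numerator and denominator both specialize to $q^{a_\lambda(c)+1}-1$ and cancel. Hence the row factor has a simple zero at $t=1$ precisely when $x$ is not in the first column, while the column factor contributes nothing.

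For the third factor $\prod_i \frac{x - tw_i}{x - qtw_i}$, the numerator $q^r - q^{a}$ vanishes at $t=1$ only if $w_i$ lies in column $r$, which is excluded because $xw$ is a horizontal strip. The denominator $q^r - q^{a+1}$ vanishes iff $w_i$ lies in column $r-1$, and a direct horizontal-strip argument shows that the only cell of column $r-1$ of $\lambda$ that can appear in $w$ is the topmost cell $(r-1, H)$: any lower cell in that column can become a removable corner only after the cells above it (all in column $r-1$) are removed, which is incompatible with $w$ being a horizontal strip. So the third factor has at worst a simple pole at $t=1$, occurring precisely when $(r-1, H) \in w$. I anticipate that the main subtlety is the resulting $0/0$ cancellation when the row-factor zero and the third-factor pole both occur. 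Writing $l = H - s$, the row numerator is $1 - t^{l+1}$ and the third-factor denominator at $w_i = (r-1, H)$ is $q^r t^s(1 - t^{l+1})$; the common factor $1 - t^{l+1}$ cancels, leaving $1/(q^r t^s)$, which is nonzero at $t=1$.

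Assembling these pieces, $A^{\lambda, w}_x(q, 1) \neq 0$ iff either $x$ is in the first column or $w$ contains (necessarily the unique) cell in the column immediately left of $x$. Applying this factor by factor to $A_T^\e(q,1) = \prod_i A^{\shape(T_{<i}), w(T_{\prec i})}_{T^{(i)}}$, nonvanishing is equivalent to requiring, for every $i$ with $T^{(i)}$ not in the first column, that $w(T_{\prec i})$ contain a cell in the column immediately left of $T^{(i)}$. Since $w(T_{\prec i}) = \{T^{(j)} : j \prec i\}$, this is exactly condition \eqref{condition}, and the lemma follows.
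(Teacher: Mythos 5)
Your proposal is correct and follows essentially the same route as the paper's proof: both case-split on whether $x=q^rt^s$ has $r=0$ or $r>0$, identify the simple zero at $t=1$ coming from the arm-zero cell $(r-1,s)$ in the row product, and show it survives exactly when some $w_i$ sits in column $r-1$ (necessarily the top cell of that column), producing a cancelling pole in $\prod_i\frac{x-tw_i}{x-qtw_i}$. The only difference is presentational: the paper carries out the explicit evaluations \eqref{eq:RZero} and \eqref{eq:RPositive} (which it reuses later), whereas you track only orders of vanishing, which suffices for the lemma.
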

\begin{proof} Given $\lambda$, $w$, and $x = q^r t^s$ as before, 
we have
\begin{align*}
 A^{\lambda,w}_x(q,1)
&=  \frac{\prod_{c\in \mathcal{R}_{\lambda,x}: a_\lambda(c) >0} q^{a_\lambda(c)} - 1}{\prod_{c\in \mathcal{R}_{\lambda,x}} q^{a_\lambda(c)+1} - 1}\quad\times\prod_{c\in \mathcal{C}_{\lambda,x}} \frac{ q^{a_\lambda(c)+1} - 1}{q^{a_\lambda(c)+1} - 1}\\
& \hspace{3em}\quad\times \left.\left(\prod_{c\in \mathcal{R}_{\lambda,x}: a_\lambda(c) =0} (1-t^{l_\lambda(c)+1})
\times\prod_{i=1}^k \frac{x-tw_i}{x-qtw_i} \right) \right|_{t=1}
.
\end{align*}
When $r=0$, we have
\begin{align*}
 A^{\lambda,w}_x(q,1)
&=   \left.\left( \prod_{c\in \mathcal{R}_{\lambda,x}: a_\lambda(c) =0} (1-t^{l_\lambda(c)+1})
\times \prod_{i=1}^k \frac{q^r t^s -tw_i}{q^r t^s -qtw_i} \right) \right|_{t=1}.
\end{align*}
If $r>0$, we get
\begin{align*}
 A^{\lambda,w}_x(q,1)
&=  \frac{1}{q^r -1} \quad\times \left.\left(\prod_{c\in \mathcal{R}_{\lambda,x}: a_\lambda(c) =0} (1-t^{l_\lambda(c)+1})
\times \prod_{i=1}^k \frac{q^r t^s -tw_i}{q^r t^s -qtw_i} \right) \right|_{t=1}.
\end{align*}

The first product, over cells in $\mathcal{R}_{\lambda,x}$ with arm length $0$, has at most one element. 
Let $c_i$ be the column for $w_i$, so that $w_i|_{t=1} = q^{c_i}$. 
If $r = 0$, then $x$ is in the first column, so the first product is empty and we can evaluate the second product to get
\begin{equation}\label{eq:RZero}
 A^{\lambda,w}_x(q,1)
=   \prod_{i=1}^k \frac{1 -q^{c_i}}{1 -q^{c_i+1}} ,
\end{equation}
which is a nonzero function of $q$ since $c_i>0$ for all $i$ by the fact that $xw$ is a horizontal strip.

If $r > 0$, then
\begin{align*}
 A^{\lambda,w}_x(q,1)
= \frac{1}{q^r -1}\times\left. \left(1-t^{l_\lambda(q^{r-1}t^s)+1} \right)
\times \prod_{i=1}^k \frac{q^r t^s -tw_i}{q^r t^s -qtw_i}  \right|_{t=1}.
\end{align*}
If there is no $c_i = r-1$, then the denominator does not contribute a pole at $t=1$, giving $A^{\lambda,w}_x (q,1) = 0$. 
\\

On the other hand, if $w_j = q^{r-1}t^{s'}$, then $l_\lambda(q^{r-1}t^s) = s'-s$ and
\begin{align}
 A^{\lambda,w}_x(q,1)
&=   \frac{1}{q^r -1} \prod_{i=1:i \neq j}^k \frac{q^r  - q^{c_i}}{q^r  -q^{c_i +1}} \quad \times   \left.\left( 1-t^{l_\lambda(q^{r-1}t^s)+1} \right)
\frac{q^r t^s -tw_j}{q^r t^s -qtw_j}  \right|_{t=1} \\
&=
    \frac{1}{q^r -1} \prod_{i=1:i \neq j}^k \frac{q^r  - q^{c_i}}{q^r  -q^{c_i +1}} \quad \times  q^{-r}t^{-s} \left.\left( 1-t^{s'-s+1} \right)
\frac{q^r t^s -tq^{r-1}t^{s'}}{1  - t^{s'-s+1}}  \right|_{t=1} \\
& 
=
q^{- 1}   \frac{q-1}{q^r -1} \prod_{i=1:i \neq j}^k \frac{q^r  - q^{c_i}}{q^r  -q^{c_i +1}},\label{eq:RPositive}
\end{align}
which is a nonzero function of $q$ since $r\neq c_i$ by the fact that $xw$ is a horizontal strip. This proves the proposition.
\end{proof}
The computation above gives an explicit expansion for $A^{\lambda,w}_x(q,1)$, and therefore, an explicit expansion of $\chi(\e;q)$.
\begin{cor} \label{cor:main_corollary}
For a given standard tableau $T$, let $ c_i(T)$ be the column of $i$, so that $T^{(i)}|_{t=1} = q^{c_i(T)}$.
Then
\begin{equation}\label{eq:FirstPositiveExpansion}
\chi_\e[X;q]  = q^{\binom{n}{2}- n-|\e| } 
\sum_{\lambda}
\left( q^{\ell(\lambda)}
[\lambda_1]_q \cdots [\lambda_{\ell(\lambda)}]_q
\sum_{T \in \overline{\SYT}^\e_\lambda}
\prod_{ \substack{ i \prec j \\ c_j(T) \neq c_{i}(T)+1}}
\frac{q^{c_j(T)} - q^{c_i(T)}}{q^{c_j(T)} - q^{c_i(T)+1}}\right) e_\lambda[X].
\end{equation}
Moreover, setting $q=1$ in this identity yields a positive expansion of $\chi_\e[X;q]$ into the elementary basis. This proves the Stanley--Stembridge conjecture. Explicitly:
\begin{equation}\label{eq:FirstPositiveExpansion1}
\chi_\e[X;1]  = 
\sum_{\lambda}
\left(
\lambda_1 \cdots \lambda_{\ell(\lambda)}
\sum_{T \in \overline{\SYT}^\e_\lambda}
\prod_{ \substack{ i \prec j \\ c_j(T) \neq c_{i}(T)+1}}
\frac{|c_j(T) - c_i(T)|}{|c_j(T)-c_i(T)-1|}\right) e_\lambda[X].
\end{equation}
\end{cor}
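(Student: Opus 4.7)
The plan is to start from the master formula for $\chi_\e[X;q]$ in Proposition \ref{prop:FDExpansion} and specialize $t=1$. Since $\chi_\e[X;q]$ does not depend on $t$, this specialization is legitimate. Applying the identity $\Ht_\mu[(q-1)X;q,1] = (-1)^{|\mu|}(q;q)_\mu e_\mu[X]$ converts the expansion into the elementary basis. Using $(q;q)_r = (1-q)^r[r]_q!$, the outer constant $(q-1)^{-n}(-1)^n(q;q)_\la$ collapses cleanly to $\prod_i[\la_i]_q!$.

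Next, by the preceding lemma, only $T\in\overline{\SYT}^\e_\la$ contribute, so I factor $A_T^\e(q,1)=\prod_{k=1}^n A^{\shape(T_{<k}),w(T_{\prec k})}_{T^{(k)}}(q,1)$ and substitute the two closed forms from the lemma: \eqref{eq:RZero} when $c_k(T)=0$ and \eqref{eq:RPositive} when $c_k(T)>0$. In the latter case the distinguished index $j$ excluded from the $\prec k$-product is the entry in column $c_k(T)-1$ satisfying $j\prec k$. A small observation to check is that this $j$ is unique: if $i_1>i_2$ both lay in the same column, column-increase with respect to $\ll$ gives $i_2\leq \e(i_1)\leq\e(k)$, so $i_2\not\prec k$. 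Thus the excluded pairs are exactly those with $c_k(T)=c_i(T)+1$, matching the condition in \eqref{eq:FirstPositiveExpansion}.

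The main bookkeeping step is collecting the per-entry prefactors. Each $k$ with $c_k(T)>0$ contributes $q^{-1}\cdot [c_k(T)]_q^{-1}$, while entries in column $0$ contribute no constant. Since column $r$ of $\la$ contains $\la'_{r+1}$ cells, the total constant factor is $q^{-(n-\ell(\la))}\prod_{r\geq 1}[r]_q^{-\la'_{r+1}}$. Combining this with $\prod_i[\la_i]_q!=\prod_{r\geq 1}[r]_q^{\la'_r}$ and the telescoping $\la'_r-\la'_{r+1}=m_r(\la)$ yields $q^{-(n-\ell(\la))}\prod_i[\la_i]_q$. Multiplied against the outer $q^{\binom{n}{2}-|\e|}$ this gives the prefactor $q^{\binom{n}{2}-n-|\e|}\cdot q^{\ell(\la)}[\la_1]_q\cdots[\la_{\ell(\la)}]_q$ in the claim. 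The surviving factors $\prod_{i\prec k,\,i\neq j}(q^{c_k(T)}-q^{c_i(T)})/(q^{c_k(T)}-q^{c_i(T)+1})$ multiplied over $k=1,\dots,n$ assemble into the stated global product over all pairs $(i,j)$ with $i\prec j$ and $c_j(T)\neq c_i(T)+1$, establishing \eqref{eq:FirstPositiveExpansion}.

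For \eqref{eq:FirstPositiveExpansion1}, I evaluate each ratio by L'Hôpital to get $(c_j-c_i)/(c_j-c_i-1)$. The pair $(i,j)$ with $i\prec j$ cannot lie in the same column (else column-increase with respect to $\ll$ would force $i\leq\e(j)$, contradicting $i\prec j$), so $c_j-c_i\neq 0$; the hypothesis $c_j\neq c_i+1$ then guarantees $c_j-c_i$ and $c_j-c_i-1$ share the same sign, so the ratio equals $|c_j-c_i|/|c_j-c_i-1|$ and is positive. Combined with $[\la_i]_q\to\la_i$, this yields the positivity claim, proving the Stanley--Stembridge conjecture. The main obstacle I anticipate is purely bookkeeping: matching the column indices to the right Pochhammer factors in the telescoping argument of paragraph three, which is routine but off-by-one-error-prone.
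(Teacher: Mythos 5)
Your proposal is correct and follows essentially the same route as the paper's proof: specialize $t=1$ via \eqref{eq:ChiEExpansion}, substitute the closed forms \eqref{eq:RZero} and \eqref{eq:RPositive} for each entry, collect the per-entry constants into $q^{\ell(\lambda)-n}\prod_i[\lambda_i]_q/\prod_i[\lambda_i]_q!$, and analyze the sign of each surviving ratio at $q=1$. The only differences are cosmetic (you track the constants through $\lambda'$ rather than writing them as $\prod_i[\lambda_i-1]_q!$, and you use L'H\^opital where the paper rewrites the ratios as quotients of $q$-integers), and your added uniqueness check for the excluded index $j$ is also immediate from $T_{\prec k}$ being a horizontal strip.
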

\begin{rmk} \label{rmk:colorings_e-expansion}
One can also interpret this formula in terms of proper colorings. We call a proper coloring $\kappa$ on the graph $\Gamma_\e$ \emph{admissible} if for every $i$ with $\kappa(i)>1$, there is an edge $v_j v_i$ (with $j<i$) for which $\kappa(j) = \kappa(i)-1$.
\\
Let $K^\e_{\lambda'}$ be the set of all admissible colorings $\kappa$ on the graph $\Gamma_\e$ such that $\lambda_i'$ is the multiplicity of the color $i$. To such a coloring we associate a weight
\[
\weight(\kappa) = \prod_{ \substack{ \text{edges $v_i v_j$,$i<j$} \\ \kappa(j) \neq \kappa(i)+1}}
\frac{|\kappa(j) - \kappa(i)|}{|\kappa(j)-\kappa(i)-1|}.
\]
Then \eqref{eq:FirstPositiveExpansion1} translates to
\[
\chi_\e[X;1] = \sum_{ \lambda}~ e_\lambda~ \sum_{\kappa  \in K^\e_{\lambda'}} \lambda_1\cdots \lambda_{\ell(\lambda)} \weight(\kappa).
\]
The $q$-version for colorings is also directly obtained from \eqref{eq:FirstPositiveExpansion} by assigning to each admissible coloring a $q$-weight appropriately.

To see why this is equivalent, one interprets $T\in \overline{\SYT}^\e_\lambda$ as a coloring $\kappa_T$ where $i$ is assigned the color $\kappa_T(i) = c_i(T)+1$. If $i \prec j$, $i$ and $j$ cannot be in the same column. This means the coloring is proper. Condition~\eqref{condition} translates to the second condition of $K^\e_{\lambda'}.$ By Remark~\ref{rmk:star_and_ll_conditions}, the two descriptions are equivalent.
\end{rmk}
\begin{proof}[Proof of Corollary \ref{cor:main_corollary}]
Using Equations \eqref{eq:RZero} and \eqref{eq:RPositive}, we get that for $T \in \overline{\SYT}^\e_\lambda$,
\begin{align*}
A^\e_T(q,1) & = \prod_{ \substack{i \prec j \\ c_j(T) \neq c_i(T) +1}} \frac{q^{c_j(T)} - q^{c_i(T)}}{q^{c_j(T)} - q^{c_i(T) +1}} \times \prod_{c_j(T) > 0 } q^{-1} \frac{q-1}{q^{c_j(T)} -1}  \\
& = \frac{q^{\ell(\lambda)-n}}{ [\lambda_1-1]_q! \cdots[\lambda_{\ell(\lambda)}-1]_q !} \prod_{ \substack{i \prec j \\ c_j(T) \neq c_i(T) +1}} \frac{q^{c_j(T)} - q^{c_i(T)}}{q^{c_j(T)} - q^{c_i(T) +1}}.
\end{align*}
Combining this with
\eqref{eq:ChiEExpansion} and using $(1-q)^{-n} (q;q)_\la = \prod_i [\la_i]_q!$ gives the above expansion.

To see that \eqref{eq:FirstPositiveExpansion} specializes to a positive $e_\lambda$ expansion at $q=1$, simply observe that for all $i\prec j$ with $c_j(T)\neq c_i(T)+1$, either $c_j(T) < c_i(T)$ or $c_j(T) > c_i(T)+1$. In the first case,
\[
\frac{q^{c_j(T)} - q^{c_i(T)}}{q^{c_j(T)} - q^{c_i(T) +1}} = \frac{[c_i(T)-c_j(T)]_q}{[c_i(T)+1-c_j(T)]_q}.
\]
In the second case, 
\[
\frac{q^{c_j(T)} - q^{c_i(T)}}{q^{c_j(T)} - q^{c_i(T) +1}} = q^{-1}\frac{[c_j(T)-c_i(T)]_q}{[c_j(T)-c_i(T)-1]_q}.
\]
In either case, setting $q=1$ produces $\frac{|c_j(T) - c_i(T)|}{|c_j(T)-c_i(T)-1|}$ and we obtain \eqref{eq:FirstPositiveExpansion1}.
\end{proof}

\begin{example}
For $\e$ in Figure \ref{fig:Dyckpath}, the $q$-chromatic symmetric function looks as follows:
\[
\chi_{(0,1,1,2,2,3,4)} = \left(7 q^{8} + 70 q^{7} + 364 q^{6} + 1162 q^{5} + 1834 q^{4} + 1162 q^{3} + 364 q^{2} + 70 q + 7\right) m_{1,1,1,1,1,1,1} 
\]
\[
+ \left(q^{8} + 15 q^{7} + 97 q^{6} + 361 q^{5} + 612 q^{4} + 361 q^{3} + 97 q^{2} + 15 q + 1\right) m_{2,1,1,1,1,1} 
\]
\[
+ \left(2 q^{7} + 21 q^{6} + 102 q^{5} + 194 q^{4} + 102 q^{3} + 21 q^{2} + 2 q\right) m_{2,2,1,1,1} + \left(3 q^{6} + 25 q^{5} + 58 q^{4} + 25 q^{3} + 3 q^{2}\right) m_{2,2,2,1} 
\]
\[
+ \left(q^{7} + 9 q^{6} + 43 q^{5} + 86 q^{4} + 43 q^{3} + 9 q^{2} + q\right) m_{3,1,1,1,1} + \left(q^{6} + 9 q^{5} + 24 q^{4} + 9 q^{3} + q^{2}\right) m_{3,2,1,1} + 
\]
\[
\left(q^{5} + 6 q^{4} + q^{3}\right) m_{3,2,2} + 2 q^{4} m_{3,3,1} + \left(q^{5} + 4 q^{4} + q^{3}\right) m_{4,1,1,1} + q^{4} m_{4,2,1}.
\]
The $e$-expansion is
\[
\chi_{(0,1,1,2,2,3,4)} = q^{4} e_{3,2,1,1} + \left(q^{5} + q^{4} + q^{3}\right) e_{3,3,1} + \left(q^{5} + q^{4} + q^{3}\right) e_{4,1,1,1} 
\]
\[
+ \left(q^{6} + 4 q^{5} + 6 q^{4} + 4 q^{3} + q^{2}\right) e_{4,2,1} + \left(q^{7} + 5 q^{6} + 8 q^{5} + 9 q^{4} + 8 q^{3} + 5 q^{2} + q\right) e_{5,1,1} 
\]
\[
+ \left(q^{8} + 4 q^{7} + 7 q^{6} + 8 q^{5} + 8 q^{4} + 8 q^{3} + 7 q^{2} + 4 q + 1\right) e_{6,1}.
\]
The coefficient in front of $e_{3,3,1}$ comes from a single tableau, see Example \ref{ex:tableau331}. Equation \eqref{eq:FirstPositiveExpansion} produces
\[
q^{21-7-13+3} [3]_q [3]_q [1]_q \times \frac{1-q}{1-q^2} \times \frac{1-q^2}{1-q^3} \times \frac{q-q^2}{q-q^3}\times \frac{q^2-1}{q^2-q},
\]
the fractions coming from the pairs $(i,j)=(3,5), (4,5), (4,6), (5,7)$. Performing cancellations results in $q^3 [3]_q$. Similarly, the coefficients in front of $e_{3,2,1,1}$, $e_{4,1,1,1}$ and $e_{6,1}$ each come from a single tableau. The coefficient in front of $e_{4,2,1}$ comes from $3$ tableaux and we observe non-trivial denominators:
\[
q^2 [4]_q [2]_q + q^3 \frac{[4]_q [2]_q}{[3]_q} + q^3 \frac{[2]_q^4}{[3]_q}.
\]
For instance, setting $q=1$ produces  a decomposition of $16$ as $16=8 + \frac{8}{3} + \frac{16}{3}$. The coefficient in front of $e_{5,1,1}$ is decomposed as a sum over $4$ tableaux, two of the summands having $[3]_q$ in the denominator.
\end{example} 

\subsection{The relation to Hikita's formula}

We will now find an equivalent expression that lets us re-derive Hikita's formula. 
Let, once again, $\lambda,w,$ and $x$ be given as in Proposition \ref{prop:Aqt-action}.
Let
$c_1<\cdots< c_k$ be the columns for $w = (w_1,\dots, w_k)$ in increasing order. Let $w^{L}_1< \cdots < w^L_m$ be the subsequence of columns such that $w^L_j -1 \neq c_i$ for some $i$, and let $w^{R}_1< \cdots < w^R_m$ be the subsequence of columns for which $w^R_{j}+1 \neq c_i$ for some $i$.
In other words, the sequence of columns $c_1<\cdots < c_k$ consists of a union of contiguous segments, and the left-most elements of these contiguous segments are denoted by $w_i^{L}$ and the right-most elements of these contiguous segments are denoted by $w_i^R$.

\begin{lem}\label{lem:Aq1}
Let $x = q^r t^s$ as in Proposition~\ref{prop:FDExpansion}. When $r=0$,
\begin{equation}\label{eq:RZeroLem}
A^{\lambda,w}_x(q,1) =   \prod_{i=1}^m \frac{[w^{L}_i]_q}{[w^R_i+1]_q}  .
\end{equation}
When $r>0$, letting $j$ be maximal such that $w^R_j$ is on the left of $x$ (in other words, $w^R_j = r-1$), then
\begin{equation}\label{eq:RPositiveLem}
A^{\lambda, w}_x(q,1)= 
q^{b(w,x)-r} \prod_{i=1}^j \frac{[r-w^L_i]_q}{[r-w^{R}_{i-1} -1]_q}
\prod_{i=j+1}^m \frac{[w^L_i-r]_q}{[w^{R}_i +1-r]_q},
 \end{equation}
where 
$w_0^R\coloneqq -1$, and $b(w,x) = \sum_{i=1}^{j}(w^L_{i}-w^R_{i-1}-1)$ counts the number of columns left of $x$ with no $w_j$. 
\end{lem}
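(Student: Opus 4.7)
The plan is to apply the explicit formulas for $A^{\lambda,w}_x(q,1)$ derived in the proof of the previous lemma, namely \eqref{eq:RZero} when $r=0$ and \eqref{eq:RPositive} when $r>0$, and to simplify each by telescoping over the maximal contiguous segments $[w^L_i,w^R_i]$ of columns.

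For the case $r=0$, the first step is to factor the product in \eqref{eq:RZero} according to the segment decomposition:
\[
\prod_{i=1}^k \frac{1-q^{c_i}}{1-q^{c_i+1}} = \prod_{i=1}^m \prod_{c=w^L_i}^{w^R_i} \frac{1-q^{c}}{1-q^{c+1}} = \prod_{i=1}^m \frac{1-q^{w^L_i}}{1-q^{w^R_i + 1}},
\]
and then to convert $(1-q^n) = (1-q)[n]_q$ in each factor, which yields the claimed formula \eqref{eq:RZeroLem}.

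For the case $r>0$, I would first rewrite the prefactor as $q^{-1}(q-1)/(q^r-1) = q^{-1}/[r]_q$, then split the remaining product over columns $c_i \ne r-1$ into contributions from segments to the left of $x$ (indices $1,\dots,j$, where $w^R_j = r-1$) and to the right (indices $j+1,\dots,m$). Segments strictly to the left telescope to $(q^r-q^{w^L_i})/(q^r-q^{w^R_i+1})$, which using $q^r - q^c = -q^c(1-q)[r-c]_q$ becomes $q^{w^L_i - w^R_i - 1}[r-w^L_i]_q/[r-w^R_i-1]_q$. The segment $i=j$, with the column $c=r-1$ excluded, telescopes to $(q^r - q^{w^L_j})/(q^r - q^{r-1}) = q^{w^L_j - r + 1}[r-w^L_j]_q$. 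For $i > j$, using $q^r - q^c = -q^r(1-q)[c-r]_q$ the telescoping yields $[w^L_i - r]_q/[w^R_i + 1 - r]_q$ with no leftover $q$-power.

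The final step is to reassemble all factors. Adopting the convention $w^R_0 = -1$ so that $[r-w^R_0-1]_q = [r]_q$, the prefactor $1/[r]_q$ becomes the $i=1$ denominator in $\prod_{i=1}^{j}[r-w^L_i]_q/[r-w^R_{i-1}-1]_q$, and the remaining $q$-integer factors match those in \eqref{eq:RPositiveLem}. The accumulated power of $q$ is $-1 + \sum_{i=1}^{j-1}(w^L_i - w^R_i - 1) + (w^L_j - r + 1)$, which a short algebraic manipulation identifies with $b(w,x)-r$ upon expanding $b(w,x) = \sum_{i=1}^{j}(w^L_i - w^R_{i-1} - 1)$. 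The main obstacle is only the careful bookkeeping of the $q$-powers across the various segments; once those are tracked correctly, the identification of the $q$-integer denominators follows immediately from the telescoping.
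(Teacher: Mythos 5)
Your proof is correct and follows essentially the same route as the paper's own argument: telescoping \eqref{eq:RZero} and \eqref{eq:RPositive} over the maximal contiguous column segments, converting each surviving factor to a $q$-integer, and tracking the leftover powers of $q$ (your bookkeeping identifying the accumulated exponent with $b(w,x)-r$ checks out). The only blemish is the sign in your intermediate identity $q^r-q^c=-q^r(1-q)[c-r]_q$ (it should be $q^r-q^c=q^r(1-q)[c-r]_q$), which cancels between numerator and denominator and does not affect the result.
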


\begin{proof}
When $r=0$, then \eqref{eq:RZero} implies after canceling the telescoping factors $(1-q^c)$ for $c\neq w_i^R,w_i^L$ that
\[
A^{\lambda,w}_x(q,1) =   \prod_{i=1}^m \frac{1 -q^{w^{L}_i}}{1 -q^{w^R_i+1}} .
\]
Dividing numerator and denominator by $(1-q)^m$ yields \eqref{eq:RZeroLem}.

When $r>0$, there must exist a $j$ such that $w_j^R$ is on the left of $x$. Taking $j$ to be the maximal such index, then \eqref{eq:RPositive} implies, after splitting the product into two products and canceling telescoping factors, that
\[
A^{\la,w}_x(q,1)= 
q^{  - 1}   \frac{q-1}{q^r -1} \frac{1}{q^r-q^{w_j^R}}\frac{\prod_{i=1}^j q^r  - q^{w^L_i}}{\prod_{i=1}^{j-1} q^r  -q^{w^{R}_i +1}}
\prod_{i=j+1}^m \frac{q^r  - q^{w^L_i}}{q^r  -q^{w^{R}_i +1}}.
\]
Dividing the numerators and denominators of each factor by $1-q$ and factoring out $q$ powers yields \eqref{eq:RPositiveLem}. Note that when $i=1$ in the denominator of the first product, $[r-w_{0}^R-1]_q = [r]_q$, which is inherited from moving $(\frac{q-1}{q^r-1})$ into the denominator.
\end{proof}

For each $T$ we have 
\[
A^\e_T(q,1) = \prod_{i=1}^n A_{T^{(i)}}^{\shape(T_{<i}),w(T_{\prec i})}(q,1).
\]
 We evaluate each factor $A_{T^{(i)}}^{\shape(T_{<i}),w(T_{\prec i})}(q,1)$ by Lemma~\ref{lem:Aq1} and apply Proposition~\ref{prop:FDExpansion}. 
The total $q$ power coefficient in the product $q^{\binom{n}{2}- |\e|} A_T^\e(q,1)$ is the following, where $f_T(q)$ is the product over all $q^{b(w,x)}$ factors that appear from \eqref{eq:RPositiveLem} for each label $i$ such that $x=T^{(i)}$ is not in the first column:

\begin{gather*}
q^{\binom{n}{2} - |\e|} ~f_T(q)~ \prod_{(a,b)\in \la} q^{-a} 
= q^{\binom{n}{2} - |\e|- \sum_i \binom{\la_i}{2}}~f_T(q).
\end{gather*}
From the fact that
\[
\sum_{i<j} \lambda_i \lambda_j  = \frac{1}{2} 
\left( 
\sum_i \lambda_i \sum_j \lambda_j - \sum_i \lambda_i^2
\right) 
= \binom{n}{2} - \sum_i \binom{\la_i}{2},
\]
we may simplify the $q$ power above to
\[
\binom{n}{2}-|\e| - \sum_i \binom{\la_i}{2}  = \sum_{i<j}\la_i \la_j - \sum_i \e(i).
\]
Finally, observe that $(1-q)^{-n} (q;q)_\la = \prod_i [\la_i]_q!$. 

For $x,\la,w$ as above, define
\[
\hat A_x^{\la,w} \coloneqq q^{b(w,x)} \prod_{i=1}^j \frac{[r-w_i^L]_q}{[r-w_{i-1}^R-1]_q}\prod_{i=j+1}^m \frac{[w_i^L-r]_q}{[w_i^R+1-r]_q},
\]
where $j\coloneqq 0$ in the case when $r=0$, and $j$ is the largest such that $w_j^R$ is to the left of $x$ when $r>0$; 
set
\[
\hat A_T^\e \coloneqq \prod_{i=1}^n \hat A_{T^{(i)}}^{\shape(T_{<i}),w(T_{\prec i})},
\]
which gives $f_T(q)A_T^\e(q,1)$. Then by Proposition \ref{prop:FDExpansion} the coefficient of $e_\la$ in $\chi_\e[X;q]$ is
\[
q^{\sum_{i<j}\la_i \la_j - \sum_i \e(i)}\prod_i[\la_i]_q!\sum_{T\in \overline{\SYT}^\e_\la}  \hat A_T^\e,
\]
which matches Hikita's formula~\cite[Theorem 3]{Hikita}.

\subsection{Proof of the main theorem}
We end the section by consolidating the proof of our main theorem, Theorem~\ref{thm:IntroMainThm}.

\begin{proof}[Proof of Theorem~\ref{thm:IntroMainThm}]
By Proposition~\ref{prop:FDExpansion}, 
we have expansions of $F_\e[X;q]$ and $\chi_\e[X;q]$ in terms of $\Ht_\lambda[X;q,t]$ and $\Ht_\lambda[(q-1)X;q,t]$, respectively. 
By Corollary~\ref{cor:main_corollary}, we have an explicit expansion of $\chi_\e$ in terms of the elementary basis, Equation~\eqref{eq:FirstPositiveExpansion}, and an explicit expansion at $q=1$, Equation~\eqref{eq:FirstPositiveExpansion1}, proving the Stanley--Stembridge conjecture.
Lemma~\ref{lem:Aq1} shows that our formula is equivalent to that of Hikita's.
\end{proof}

\section{A Hall--Littlewood expansion}

In this section, we use Theorem~\ref{thm:IntroMainThm} to give an expansion of $\chi_\e$ in terms of Hall--Littlewood symmetric functions.
Combining Equation~\eqref{eq:HtoJ} with
\cite[VI.8.4.ii]{Macdonald-book}, we find that
\[
\Ht_\lambda[(q-1)X;q,0] 
= q^{|\lambda'| + n(\lambda')} J_{\lambda'}[X;0,q^{-1}]
=
q^{|\lambda'| + n(\lambda')} Q_{\lambda'}[X;q^{-1}].
\]

Recall from Proposition~\ref{prop:FDExpansion} that
\[
\chi_\e[X;q] = (q-1)^{-n}\sum_\la  ~q^{\binom{n}{2} - |\e|}~
\left(\sum_{T \in \SYT^\e_\lambda} A_T^{\e}(q,t)\right) \Ht_\la[(q-1)X;q,t].
\]
Next, we specialize $A_x^{\la,w}$ at $t=0$. Let $\la,w,x$ be as in Proposition~\ref{prop:FDExpansion}, and recall that
\begin{equation}
 A^{\lambda,w}_x \coloneqq 
  \prod_{c\in \mathcal{R}_{\lambda,x}} \frac{ q^{a_\lambda(c)} - t^{l_{\lambda}(c)+1}}{q^{a_\lambda(c)+1} - t^{l_\lambda(c)+1}}
\,\times\,\prod_{c\in \mathcal{C}_{\lambda,x}} \frac{ q^{a_\lambda(c)+1} - t^{l_{\lambda}(c)}}{q^{a_\lambda(c)+1} - t^{l_\lambda(c)+1}}
\,\times\, \prod_{i=1}^k \frac{x-tw_i}{x-qtw_i}.\label{eq:Axlamw-HL}
\end{equation}
Letting $x=q^r t^s$, then we have two cases: either $s=0$ or $s>0$. In the first case when $s=0$, the product over $\mathcal{C}_{\la,x}$ is empty, and at $t= 0$, the other factors of \eqref{eq:Axlamw-HL} become
\[
\prod_{c\in \mathcal{R}_{\la,x}} \frac{q^{a_\la(c)}}{q^{a_\la(c)+1}}\times \left.\left(\prod_{i=1}^k\frac{q^r-tw_i}{q^r-qtw_i}\right)\right|_{t=0} = q^{-r}.
\]

In the second case when $s>0$, then all factors in the product over $\mathcal{C}_{\la,x}$ in \eqref{eq:Axlamw-HL} cancel except for when $c$ is the cell $x'$ directly underneath $x$, in which case $l_\la(x') = 0$. Since $a_\la(x') = \la_s-r-1$, then $a_\la(x')+1 = \la_s-r = \la_s-\la_{s+1}$. Then, at $t=0$, the expression $A^{\la,w}_x$ becomes
\[
 \prod_{c\in \mathcal{R}_{\la,x}} \frac{q^{a_\la(c)}}{q^{a_\la(c)+1}} \times \frac{q^{\la_s-\la_{s+1}} - 1}{q^{\la_s-\la_{s+1}}} \times \left.\left(\prod_{i=1}^k\frac{x-tw_i}{x-qtw_i}\right)\right|_{t=0}.
\]
There are three cases for each factor in the last product: Let $w_i = q^{r_i}t^{s_i}$. First, if $s\leq s_i$, then
\[
\left.\frac{x-tw_i}{x-qtw_i}\right|_{t=0} = \left.\frac{1-tw_i/x}{1-qtw_i/x}\right|_{t=0} = 1.
\]
Second, if $s>s_i+1$ then
\[
\left.\frac{x/(tw_i)-1}{x/(tw_i)-q}\right|_{t=0} = q^{-1}.
\]
Third, if $s=s_i+1$ then
\[
\frac{1-tw_i/x}{1-qtw_i/x}\Big|_{t=0} = \frac{1-q^{r_i-r}}{1-q^{r_i-r+1}}.
\]
Thus,
\begin{align*}
A^{\la,w}_x(q,0) &= q^{-r-m}\frac{q^{\la_s-\la_{s+1}}-1}{q^{\la_s - \la_{s+1}}} \times \crap\\
&\hspace{3em} \text{ with } \crap = \prod_{i: s_i = s-1} \frac{q^{r_i-r}-1}{q^{r_i-r+1}-1},
\end{align*}
where $m$ is the number of $w_i$ which are in a strictly lower row than $x$, and the factor $\crap$ is $1$ if there exists no $i$ such that $s_i=s-1$; and otherwise
\[
\crap = \frac{q^{w^L-r}-1}{q^{\la_{s}-r}-1},
\]
where $w^L$ is the column (starting from $0$) of the left-most $w_i$ with $s_i=s-1$.

Now, given $T\in \SYT^\e_\la$ and a label $i$, let $\low(T,i)$ be the number of entries of $T_{\prec i}$ that are at least 2 rows below $T^{(i)}$. Furthermore, if $T^{(i)}$ is in row $s+1$ (starting from $1$), then let $\rowdiff(T,i) \coloneqq \shape(T_{< i})_s - \shape(T_{< i})_{s+1}$. If $T_{\prec i}$ contains elements in row $s$, let $\L(T,i)$ be the column of its left-most element in row $s$ of $T_{\prec i}$. Finally, we write $T_1$ for the first row of $T$.

Observe that the products over all $q^{-r}$ as $i$ runs over entries of some $T$ is $q^{-n(\la')}$. Thus, by the above discussion we have
\begin{align*}
\chi_\e[X;q] = (q-1)^{-n}\sum_\la Q_{\la'}&[X;q^{-1}] q^{ \binom{n+1}{2} - |\e|} \\ & \times \left(\sum_{T \in \SYT^\e_\lambda}\prod_{i \notin T_1} q^{- \low(T,i)} \frac{q^{\rowdiff(T,i)}-1}{q^{\rowdiff(T,i)}}\prod_{i:(T_{\prec i})_s\neq \emptyset} \frac{q^{\L(T,i)-\shape(T_{<i})_{s+1}}-1}{q^{\rowdiff(T,i)}-1}\right).
\end{align*}
We may rewrite the factor in parentheses as
\[
\sum_{T \in \SYT^\e_\lambda} \prod_{i \notin T_1} q^{-\low(T,i)-\rowdiff(T,i)}  \prod_{\substack{i \notin T_1,\\(T_{\prec i})_{s}=\emptyset}} (q^{\rowdiff(T,i)}-1) \prod_{i:(T_{\prec i})_s\neq \emptyset} (q^{\L(T,i)-\shape(T_{<i})_{s+1}}-1),
\]
and by dividing by a power of $q-1$ we may rewrite each factor as a $q$-number. 
Therefore, we have the following theorem.

\begin{thm}
We have the following expansion of $\chi_\e$ in terms of reversed $Q_\la$ symmetric functions:
\begin{align*}
\chi_\e[X;q] = \sum_\la Q_{\la'}&[X;q^{-1}] \frac{ q^{ \binom{n+1}{2} - |\e|} }{(q-1)^{\la_1}}
\\
&\times \left(\sum_{T \in \SYT^\e_\lambda} \prod_{i \notin T_1} q^{-\low(T,i)-\rowdiff(T,i)}  \prod_{\substack{i \notin T_1,\\(T_{\prec i})_{s}=\emptyset}} [\rowdiff(T,i)]_q \prod_{i:(T_{\prec i})_s\neq \emptyset} [\L(T,i)-\shape(T_{<i})_{s+1}]_q\right).
\end{align*}
\end{thm}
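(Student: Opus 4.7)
My plan is to specialize the master formula \eqref{eq:MasterFormulaChi} of Proposition~\ref{prop:FDExpansion} at $t=0$ and identify the resulting combinatorial sum with the expression in the statement. The first input is the identity $\Ht_\la[(q-1)X;q,0] = q^{|\la'|+n(\la')} Q_{\la'}[X;q^{-1}]$, which comes from \eqref{eq:HtoJ} together with \cite[VI.8.4.ii]{Macdonald-book} and replaces the Macdonald basis by reversed Hall--Littlewood $Q$-functions. The remaining task is then to evaluate $A_T^\e(q,0) = \prod_i A_{T^{(i)}}^{\shape(T_{<i}),w(T_{\prec i})}(q,0)$ for each $T \in \SYT^\e_\la$ and collect terms.

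The heart of the argument is a cell-by-cell evaluation of $A^{\la,w}_x$ at $t=0$, where $x = q^r t^s$. When $s=0$, the column product is empty, the row product telescopes, and each factor $(x-tw_i)/(x-qtw_i)$ evaluates to $1$ at $t=0$, leaving just $q^{-r}$. When $s>0$, only the column cell directly below $x$ survives in the column product and contributes $(q^{\rowdiff(T,i)}-1)/q^{\rowdiff(T,i)}$; the row product again telescopes to $q^{-r}$; and the factors over $w_i = q^{r_i}t^{s_i}$ split into three sub-cases depending on the row of $w_i$: contribution $1$ if $s_i \geq s$, contribution $q^{-1}$ if $s_i < s-1$, and a rational factor $(1-q^{r_i-r})/(1-q^{r_i-r+1})$ if $s_i = s-1$. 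A further telescoping across all $w_i$ in the row immediately below $x$ collapses the product of those rational factors to a single piece depending only on the left-most such $w_i$, whose column is $\L(T,i)$.

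Assembling these contributions across the cells of $T$, the $q^{-r}$ factors aggregate to $q^{-n(\la')}$, which combines with $q^{\binom{n}{2}-|\e|}$ from Proposition~\ref{prop:FDExpansion} and the $q^{|\la'|+n(\la')}$ from the Macdonald-to-$Q$ conversion to yield the prefactor $q^{\binom{n+1}{2}-|\e|}$ after using $|\la'|=n$ and $\binom{n}{2}+n=\binom{n+1}{2}$. The $q^{-1}$ factors accumulate, entry by entry, to $q^{-\low(T,i)}$ from the count of $w_i$ at least two rows below $T^{(i)}$. For each $i \notin T_1$, the column cell contributes a numerator factor $(q^{\rowdiff(T,i)}-1)$, which either survives directly (when $(T_{\prec i})_s = \emptyset$) or cancels against the denominator of the telescoped row-$(s-1)$ contribution, leaving $(q^{\L(T,i)-\shape(T_{<i})_{s+1}}-1)$ instead. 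In both cases exactly one linear factor $(q^k-1) = (q-1)[k]_q$ appears per $i \notin T_1$, so extracting $(q-1)^{n-\la_1}$ together with the original $(q-1)^{-n}$ prefactor produces the $(q-1)^{-\la_1}$ in the statement.

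The main obstacle is the careful bookkeeping: one must track the three sub-cases for each $w_i$, the telescopings within each sub-case, and the separation of entries by row, all while precisely matching the statistics $\low$, $\rowdiff$, and $\L$ to their definitions. Entries in the first row of $T$ contribute nothing beyond $q^{-r}$, which is already absorbed in $q^{-n(\la')}$, and this is what justifies restricting the inner products to $i \notin T_1$ in the final expression.
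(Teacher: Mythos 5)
Your proposal is correct and follows essentially the same route as the paper: specialize the master formula \eqref{eq:MasterFormulaChi} at $t=0$, convert $\Ht_\la[(q-1)X;q,0]$ to $q^{|\la'|+n(\la')}Q_{\la'}[X;q^{-1}]$ via \eqref{eq:HtoJ} and \cite[VI.8.4.ii]{Macdonald-book}, and evaluate $A^{\la,w}_x(q,0)$ cell by cell with the same case split ($s=0$ versus $s>0$, and the three sub-cases for each $w_i$ according to its row), assembling the statistics $\low$, $\rowdiff$, and $\L$ exactly as in the paper. The bookkeeping you describe — the aggregation of $q^{-r}$ into $q^{-n(\la')}$, the single $q$-number per entry outside the first row, and the resulting $(q-1)^{-\la_1}$ — matches the paper's argument in every respect.
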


In \cite{Aggarwal-Borodin-Wheeler}, an expansion for arbitrary LLT polynomials is given in terms of the Hall-Littlewood basis. It would be interesting to see how our formula is related to theirs in the special case of unicellular LLTs.

\section{Acknowledgments}
This work was supported by ERC grant ``Refined invariants in combinatorics, low-dimensional topology and geometry of moduli spaces,'' No. 101001159.

\bibliographystyle{plainurl}
\bibliography{Chromatic.bib}

\end{document}